\DeclareSymbolFont{usualmathcal}{OMS}{cmsy}{m}{n}
\DeclareSymbolFontAlphabet{\mathcal}{usualmathcal}
\newcommand\void[1]       {}
\tikzset{->-/.style={decoration={markings,mark=at position #1 with {\arrow{stealth}}},postaction={decorate}},->-/.default=0.55}
\colorlet{e_ext}{red}
\colorlet{m_ext}{blue!30}
\tikzset{e_str/.style={very thick,red!80}}
\tikzset{m_str/.style={very thick,blue!80}}
\tikzset{m_dual_str/.style={thick,dashed,blue}}
\tikzset{link_label/.style={scale=0.8,black}}
\theoremstyle{definition}
\newtheorem{thm}{Theorem}[section]
\newtheorem{prop}[thm]{Proposition}
\newtheorem{cor}[thm]{Corollary}
\newtheorem{lem}[thm]{Lemma}
\newtheorem{prob}[thm]{Problem}
\newtheorem{defn}[thm]{Definition}
\newtheorem{expl}[thm]{Example}
\newtheorem{rem}[thm]{Remark}
\numberwithin{equation}{section}
\newcommand\be            {\begin{equation}}
\newcommand\ee            {\end{equation}}
\newcommand\bea           {\begin{eqnarray}}
\newcommand\eea         {\end{eqnarray}}
\newcommand\bnu          {\begin{enumerate}}
\newcommand\enu          {\end{enumerate}}
\newcommand\bit          {\begin{itemize}}
\newcommand\eit          {\end{itemize}}
\newcommand{\pf}{\begin{proof}}
\newcommand{\epf}{\qed\end{proof}}
\providecommand{\leftsquigarrow}{%
  \mathrel{\mathpalette\reflect@squig\relax}%
}
\newcommand{\reflect@squig}[2]{%
  \reflectbox{$\m@th#1\rightsquigarrow$}%
}
\DeclareMathAlphabet{\mathcal}{OMS}{cmsy}{m}{n}	
\DeclareMathAlphabet{\mathsf}{OT1}{cmss}{m}{n}	
\newcommand\Cb			{\mathbb{C}}
\newcommand\Nb			{\mathbb{N}}
\newcommand\Zb			{\mathbb{Z}}
\newcommand\CA			{\EuScript{A}}
\newcommand\CB			{\EuScript{B}}
\newcommand\CC			{\EuScript{C}}
\newcommand\CE			{\EuScript{E}}
\newcommand\CF			{\EuScript{F}}
\newcommand\CL			{\EuScript{L}}
\newcommand\CM			{\EuScript{M}}
\newcommand\CP			{\EuScript{P}}
\newcommand\CQ			{\EuScript{Q}}
\newcommand\CX			{\EuScript{X}}
\newcommand\CY			{\EuScript{Y}}
\newcommand{\FZ}			{\text{\usefont{U}{euf}{m}{n}Z}}
\newcommand\BZ			{\mathbf{Z}}
\DeclareMathOperator{\End}{End}
\DeclareMathOperator{\id}{id}
\DeclareMathOperator{\tr}{tr}
\DeclareMathOperator{\ev}{ev}
\DeclareMathOperator{\coev}{coev}
\DeclareMathOperator{\fun}{Fun}
\DeclareMathOperator{\RMod}{RMod}
\newcommand{\rev}			{\mathrm{rev}}
\newcommand\vect			{\mathrm{Vec}}
\newcommand\rep			{\mathrm{Rep}}
\newcommand\Irr			{\mathrm{Irr}}
\newcommand{\Witt}{\mathds{W}}
\newcommand{\Fus}{\mathbf{Fus}}
\newcommand{\SFC}{\mathbf{SFC}}
\newcommand{\BFC}{\mathbf{BFC}}
\newcommand{\NBFC}{\mathbf{NBFC}}
\newcommand{\sfcFus}{{}^{\mathrm{sfc}}\mathbf{Fus}}
\newcommand{\bfcFus}{{}^{\mathrm{bfc}}\mathbf{Fus}}
\newcommand{\nbfcFus}{{}^{\mathrm{nbfc}}\mathbf{Fus}}
\newcommand{\cfrFus}{{}^{\mathrm{cfr}}\mathbf{Fus}}
\newcommand{\fus}{\mathsf{Fus}}
\newcommand{\sfcfus}{{}^{\mathrm{sfc}}\mathsf{Fus}}
\newcommand{\bfcfus}{{}^{\mathrm{bfc}}\mathsf{Fus}}
\newcommand{\nbfcfus}{{}^{\mathrm{nbfc}}\mathsf{Fus}}
\newcommand{\cfrfus}{{}^{\mathrm{cfr}}\mathsf{Fus}}
\newcommand{\sfc}{\mathsf{SFC}}
\newcommand{\bfc}{\mathsf{BFC}}
\newcommand{\nbfc}{\mathsf{NBFC}}
\newcommand\arXiv[1]{\href{http://arxiv.org/abs/#1}{\nolinkurl{arXiv:#1}}}
\newcommand{\gt}{\operatorname{GT}}
\newcommand{\mrt}{\sim}
\newcommand{\weq}{\approx}
\newcommand{\bt}{\boxtimes}
\renewcommand{\1}{\mathds{1}}
\newcommand{\oeq}{\stackrel{\otimes}{\cong}}
\newcommand{\breq}{\stackrel{\operatorname{br}}{\cong}}
\newcommand{\FPdim}{\operatorname{FPdim}}
\newcommand{\pu}{\operatorname{pu}}
\newcommand{\ord}{\operatorname{ord}}
\begin{document}

\title{Generalized Witt and Morita equivalences}
\author[a]{Liang Kong \thanks{Email: \href{mailto:kongl@sustech.edu.cn}{\tt kongustc@outlook.com}}}
\author[b]{Yilong Wang \thanks{Email: \href{mailto:wyl@bimsa.cn}{\texttt wyl@bimsa.cn}}}
\author[c,b]{Hao Zheng \thanks{Email: \href{mailto:haozheng@mail.tsinghua.edu.cn}{\tt haozheng@mail.tsinghua.edu.cn}}}
\affil[a]{International Quantum Academy, Shenzhen 518048, China}
\affil[b]{Beijing Institute of Mathematical Sciences and Applications, Beijing 101408, China}
\affil[c]{Institute for Applied Mathematics, Tsinghua University, Beijing 100084, China}
\date{\vspace{-5ex}}

\maketitle

\begin{abstract}
In this work, we introduce a family of new equivalence relations among fusion categories that are less refined than the usual Morita equivalence. We obtain abelian groups by quotienting these new equivalence relations from the commutative monoids of the equivalence classes of all fusion categories. Moreover, we upgrade them to equivalence relations among nondegenerate braided fusion categories that are more refined than the usual Witt equivalence. As a consequence, we obtain new abelian groups that are more refined than the usual Witt group. These new groups allow us to access the internal structures within Witt classes. We expect that they are useful in the classification program of (braided) fusion (higher) categories and in the study of gapless edges of 2+1D topological orders. 
\end{abstract}

\tableofcontents

\section{Introduction}

The classification of (braided) fusion categories is a well-established mathematical program, which has been developed for years and has accumulated a lot of interesting and important results. The motivation of this program is not limited to the natural interests and importance of understanding the intrinsic structures of (braided) fusion categories and their representations. It has also been motivated by the classification programs in many other branches of mathematics that are deeply intertwined with (braided) fusion categories. Important examples include (but are not limited to) vertex operator algebras, (weak) Hopf algebras, infinite-dimensional Lie algebras, subfactors, conformal nets, quantum invariants in low dimensional topology, and the mathematical theory of topological/conformal field theories. 

In recent years, there have been increasing demands from physics for further advances in this program, especially in the study of topological orders \cite{Wen89,Wen90} in condensed matter physics (see \cite{Wen17,Wen19} for recent reviews). More precisely, a modular tensor category (MTC), i.e., a braided fusion category with additional structures and properties, gives a precise mathematical description of a 2+1D topological order up to chiral central charges (see for example \cite{Kit06}). A fusion category gives a precise mathematical description of a gapped boundary of a 2+1D topological order \cite{KK12}. Therefore, the classification of MTCs (or fusion categories) is essentially the same thing as that of 2+1D topological orders (or their gapped boundaries).



There are at least two approaches to the classification problem. We briefly review the ideas 
of both approaches. This work focuses on the second approach. 

The first approach is to classify (braided) fusion categories with certain nice or natural properties or by fixing certain sub-structures or invariants, such as the rank (the number of isomorphism classes of simple objects), the (Frobenius-Perron) dimension, or the fusion ring. Many such classification programs have been actively conducted, and, sometimes, have been developed based on various types of sources for fusion categories, including the representation categories of finite groups, semisimple quasi-Hopf algebras, quantum groups at roots of unity, and subfactor theory. For example, when the Frobenius-Perron dimension of every object in a fusion category is an integer, such a fusion category is called integral. It is well-known that an integral fusion category is equivalent to the representation category of a semisimple quasi-Hopf algebra. Hence, it is natural to study such fusion categories by their dimensions and by taking advantage of the techniques developed in the study of finite groups and Hopf algebras (see for example \cite{EGO04,ENO11,JL09}). 

Unlike fusion categories coming from other sources, subfactor theory is capable of producing ``exotic'' fusion categories that are non-(weakly) integral and have noncommutative fusion rules, such as the ones given by the (extended) Haagerup subfactors \cite{AH99, BPSS12}. Moreover, subfactors have universality in the following sense: any unitary fusion category can be associated with some (essentially unique) hyperfinite subfactor such that the even half of its standard invariant is the given fusion category \cite{FalRau13}. Therefore, understanding finite-index subfactors is crucial in the classification of general fusion categories. The classification of subfactors of the small index was pioneered in \cite{GHJ89,Haa93}, and the classification of subfactors up to index 5 and beyond is an accumulation of works of many, see \cite{JMS14, AMP23} for a summary.

One can also start with fusion rings, which are based rings sharing similar formal properties as the Grothendieck rings of fusion categories, and ask whether there are fusion categories realizing (or categorifying) such rings as their Grothendieck rings. The Tambara-Yamagami categories \cite{Tambara1998} are the perfect examples of products of such a classification. The Tambara-Yamagami categories are now understood as a special case of the near-group fusion categories, which are fusion categories with all but 1 simple object invertible \cite{Sieh03,EvG14,Izu17}. Although the existence of many such fusion categories and their conjectural properties and generalizations are still open, they have already inspired a lot of related research including \cite{GI20, IT21, CIP21}. Analytic approaches have also been employed in the categorification problem of fusion rings, resulting in necessary (computationally efficient) conditions for a fusion ring to admit unitary categorification, see \cite{LPW21,LPR22}. Classification of fusion categories by rank is also studied in \cite{Ost03f, O15}. Results on categorifiable integral fusion rings can also be seen in the literature \cite{rings-rk5}, but without additional structures or assumptions, it would be exponentially more difficult to classify general fusion categories of higher ranks.

The notion of a modular tensor category (MTC) can be regarded as a categorical generalization of that of a nondegenerate quadratic form on a finite abelian group. From this point of view, it is fair to say that the classification of MTCs and the associated topological field theories is similar to Wall's classification of quadratic forms \cite{Wall1963}, which was used in the study of differential topology \cite{Wal63-1}.

Several classification programs of MTCs have been actively pursued. One of the theoretical foundation of such classification programs is the rank finiteness theorem \cite{RankFinite}, which says up to equivalence, there are only finitely many MTCs of a given rank. The classification of MTCs with ranks up to 5 is finished in \cite{RSW09, Rank5}, and partial results on the classification of MTCs of rank 6 is given in \cite{Cre18, NRWW23}. In the above classifications, the action of the absolute Galois group on the set of simple objects is utilized as the key tool, which inspired the classification of MTCs according to the number of Galois orbits. In particular, MTCs with a single Galois orbit are completely classified in \cite{NWZ20a}, and partial results on MTCs with two Galois orbits are obtained in \cite{PSYZ23}. In the development of these results, properties of the modular data and congruence representations of $\operatorname{SL}_2(\mathbb{Z})$ associated with MTCs have played such prominent roles that classifying modular data that is potentially realizable by MTCs itself has become an active area of research in the classification program \cite{NRWW23, NRW23}. Such techniques are also applied to the study of super-modular categories \cite{CKSY23}. Similar to the fusion case, the classification of MTCs by dimension is also an important direction, for example, \cite{BR12, IntModFP, DonNat18, CzP22} and \cite{ABPP23}.

The second approach is to organize the entire classification program in multiple layers. By introducing an equivalence relation among fusion categories, one can first try to work on a rough classification up to this equivalence relation. Two well-known examples of  this idea are the Morita equivalence among fusion categories \cite{Mug031} and the Witt equivalence among nondegenerate braided fusion categories \cite{DMNO} (see \cite{DNO,Sch17,NRWZ22} for further developments). The study of the Morita classes of fusion categories and that of the Witt classes of nondegenerate braided fusion categories are deeply intertwined as we explain below. Along the way, we also explain the mathematical and physical motivations to go beyond the Morita and the Witt equivalences. 
\bnu
\item 
The set of Morita classes of fusion categories is a commutative monoid under the Deligne tensor product $\boxtimes$. However, it is not clear if there are any additional and useful structures on this monoid. It is natural to ask if it is possible to quotient more so that we obtain an abelian group. In other words, it suggests us to find new equivalence relations among fusion categories that are less refined than the Morita equivalence. 
\item 
It is well known that two fusion categories are Morita equivalent if and only if they share the same Drinfeld center \cite{Mug031,ENO11}. It means that to classify fusion categories up to Morita equivalence amounts to classifying nondegenerate braided fusion categories within the trivial Witt class. It demands us to know the internal structures within the trivial Witt class. In other words, it suggests us to find new equivalence relations among nondegenerate braided fusion categories that are more refined than the Witt equivalence relation. 
\item 
According to the unified mathematical theory of gapped and gapless edges of 2+1D topological orders \cite{KZ18,KZ20,KZ21-gapless-2, CJKYZ20,CW23,LY23,CJW22}, in order to study the phase diagram of the gapless edges of a fixed 2+1D bulk phase, one needs to unravel the hidden structures within each Witt class, such as the orbits under the action of the commutative monoid of elements within the trivial Witt class. 
This suggests that we should find some refinements of the Witt group. 
\enu

In this work, inspired by an earlier physical proposal in \cite[Section\ VIII.D \& VIII.E]{KW14}, we introduce a family of equivalence relations among fusion categories so that we obtain abelian groups if we quotient these equivalence relations. Moreover, we upgrade them to equivalence relations among nondegenerate braided fusion categories such that they give refinements of the usual Witt equivalence. We expect that such obtained new groups will be useful to the classification program of (braided) fusion categories and reveal the internal structures of each Witt class that are important to physical applications. 

\bigskip
\noindent {\bf Acknowledgements}: We would like to thank Dmitri Nikshych, Xiao-Gang Wen and Zhi-Hao Zhang for useful comments. LK is supported by NSFC (Grant No. 12574175) and by Guangdong Basic and Applied Basic Research Foundation (Grant No. 2020B1515120100). 
YW is supported by NSFC (Grant No. 12301045) and by Beijing Natural Science Foundation Key Program (Grant No. Z220002) and by the BIMSA startup fund.
HZ is supported by NSFC under Grant No. 11871078 and by Startup Grant of Tsinghua University and BIMSA.

\section{Preliminaries}
In this section, we briefly review basic concepts related to (braided) fusion categories and set up notations and conventions (see \cite{EGNO, DGNO-BFC} for more details). 

\subsection{Fusion categories}
A fusion category $\CX$ over $\mathbb{C}$ is a finitely semisimple, $\mathbb{C}$-linear abelian, rigid monoidal category whose tensor unit $\1$ is simple. The category $\CX$ equipped with the opposite tensor product $a \overline{\otimes} b := b \otimes a$ for $a, b \in \CX$ is also a fusion category, which is denoted by $\overline{\CX}$. We denote the set of isomorphism classes of $\CX$ by $\Irr(\CX)$. 

For all $a \in \CX$, we fix a choice of its left dual, and denote it by $(a^*, \ev_a: a^* \otimes a \to \1, \coev_a: \1 \to a \otimes a^*)$ with $a^* \in \CX$. A simple object $X \in \Irr(\CX)$ is called \emph{invertible} if both $\ev_X$ and $\coev_X$ are isomorphisms, and $\CX$ is called pointed if all of its simple objects are invertible. It is not hard to derive that, for any pointed fusion category $\CX$, there exists a finite group $G$ and a normalized 3-cocycle $\omega \in Z^3(G, \mathbb{C}^\times)$ such that $\CX$ is tensor equivalent to $\vect_{G}^{\omega}$, the category of finite-dimensional $G$-graded complex vector space whose associativity is twisted by $\omega$ (see also \cite{EGNO}).

For any morphism $f: a \to a^{**}$, we define its left quantum trace to be  $\tr_{a}(f) = \ev_{a^*} \circ (f \otimes \id_{a^*}) \circ \coev_{a} \in \End_{\CX}(\1) \cong \mathbb{C}$. According to \cite{Mug031, ENO05}, since $\CX$ is fusion, $a \cong a^{**}$ for all $a \in \CX$, and the squared norm of a simple object $X \in \Irr(\CX)$ is defined to be $|X|^2 = \tr_{X}(h) \cdot \tr_{X^*}((h^{-1})^*) \in \mathbb{C}$, where $h: X \to X^{**}$ is any nonzero morphism. Then the \emph{global dimension} of $\CX$ is defined to be 
\begin{equation*}
\dim(\CX) = \sum_{X \in \Irr(\CX)} |X|^2\,.  
\end{equation*}
The Frobenius-Perron Theorem provides another notion of dimension on $\CX$: For each object $a \in\CX$, its Frobenius-Perron dimension (FP-dimension) $\FPdim(a)$ is the largest positive eigenvalue of the matrix of (left) multiplication by $a$. Then the FP-dimension of $\CX$ is defined to be $\FPdim(\CX) = \sum_{X \in \Irr(\CX)} \FPdim(X)^2$. It is shown in \cite{ENO05} that both $\dim(\CX)$ and $\FPdim(\CX)$ are totally positive algebraic integers, i.e., any of the Galois conjugates of these dimensions is a positive real number. Moreover, $\FPdim(\CX) \ge \dim(\CX)$, and when $\FPdim(\CX) = \dim(\CX)$, we call $\CX$ \emph{pseudounitary}.

A braided fusion category is a fusion category $\CA$ equipped with a braiding, i.e., natural isomorphisms $\beta_{a, b}: a \otimes b \xrightarrow{\cong} b \otimes a, \forall a,b\in \CA$ satisfying the hexagon axioms \cite{JS93}. In particular, $\beta$ endows the identity functor with a monoidal structure yielding a monoidal equivalence $\CA \stackrel{\otimes}{\cong} \overline{\CA}$. The M\"uger center of $\CA$, denoted by $\FZ_2(\CA)$, is the full fusion subcategory of $\CA$ generated by objects $a \in \CA$ such that $\beta_{b,a} \circ \beta_{a, b} = \id_{a \otimes b}$ for all $b \in \CA$. A braided fusion category $\CA$ is \emph{symmetric} if $\FZ_2(\CA) = \CA$. By Deligne's results \cite{Del90,Del02}, if $\CA$ is a symmetric fusion category, then there exists a finite group $G$ such that the underlying fusion category of $\CA$ is tensor equivalent to $\rep(G)$, the category of finite-dimensional complex representations of $G$, while the braiding on $\CA$ is either the usual one on $\rep(G)$, in which case we write $\CA \breq \rep(G)$; or is twisted by a central element $z \in G$ of order 2, in which case we write $\CA \breq \rep(G,z)$.

If the M\"uger center of $\CA$ is minimal, i.e., $\Irr(\FZ_2(\CA)) = \{\1\}$, then we call $\CA$ \emph{nondegenerate}. In other words, $\CA$ is nondegenerate if and only if there is a braided tensor equivalence between $\FZ_2(\CA)$ and $\vect$, the category of finite-dimensional complex vector spaces. The Drinfeld center construction \cite{JS91,Maj91} assigns a braided fusion category $\FZ_1(\CX)$ to any given fusion category $\CX$. It is well-known that $\FZ_1(\CX)$ is nondegenerate \cite{Mug032,DGNO-BFC}, or equivalently, we have a braided equivalence $\FZ_2(\FZ_1(\CX)) \breq \vect$. Moreover, we have $\dim(\FZ_1(\CX)) = \dim(\CX)^2$ \cite[Thm.\ 1.2]{Mug032} and $\FPdim(\FZ_1(\CX)) = \FPdim(\CX)^2$ \cite[Prop.\ 8.12]{EGNO}.

\subsection{2-Categories and commutative monoids} \label{sec:sub-monoids}

We discuss some symmetric monoidal 2-categories and the associated commutative monoids and introduce some notations and conventions along the way. 
\bnu
\item We denote by $\Fus$ the 2-category of fusion categories (as objects), monoidal equivalences (as 1-morphisms) and invertible monoidal natural transformations (as 2-morphisms). It is symmetric monoidal with the tensor unit given by $\vect$ and the tensor product given by the Deligne tensor product $\boxtimes$. We denote by $\fus$ the underlying set of equivalence classes of objects in $\Fus$. It is clear that $\fus$ is a commutative monoid with the identity given $\vect$ and the multiplication given by $\boxtimes$. We introduce a few monoidal sub 2-categories of $\Fus$ and the associated submonoids of $\fus$. 

\bnu
\item We denote by $\cfrFus$ the monoidal sub-2-category of $\Fus$ consisting of those fusion categories that have commutative fusion rings. We denote its underlying commutative monoid by $\cfrfus$. 

\item We denote by $\bfcFus$ the monoidal sub-2-category of $\Fus$ consisting of those fusion categories that are monoidally equivalent to some braided fusion categories. We denote its underlying commutative monoid by $\bfcfus$.

\item We denote by $\nbfcFus$ the monoidal sub-2-category of $\Fus$ consisting of those fusion categories that are monoidally equivalent to some nondegenerate braided fusion categories. We denote its underlying commutative monoid by $\nbfcfus$.

\item We denote by $\sfcFus$ the monoidal sub-2-category of $\Fus$ consisting of those fusion categories that are monoidally equivalent to some symmetric fusion categories. We denote its underlying commutative monoid by $\sfcfus$.
\enu
Let $S$ be a subset of $\fus$. For convenience, from now on, we use the notation $\CA \in S$ to mean a fusion category $\CA$ whose monoidal equivalence class is in $S\subset \fus$.

\item We denote by $\BFC$ the symmetric monoidal 2-category of nondegenerate braided fusion categories (as objects), braided monoidal equivalences (as 1-morphisms) and invertible braided monoidal natural transformations (as 2-morphisms). The tensor unit of $\BFC$ is again $\vect$ and the tensor product in $\BFC$ is again the Deligne tensor product $\boxtimes$. We denote its underlying commutative monoid by $\bfc$. We discuss a couple of monoidal sub-2-categories of $\BFC$ and their underlying commutative submonoids of $\bfc$ below. 

\bnu
\item We denote by $\NBFC$ the symmetric monoidal 2-category of nondegenerate braided fusion categories (as object), braided monoidal equivalences (as 1-morphisms) and invertible braided monoidal natural transformations (as 2-morphisms). We denote its underlying commutative monoidal by $\nbfc$. 

\item We denote by $\SFC$ the symmetric monoidal 2-category of symmetric fusion categories (as object), braided monoidal equivalences (as 1-morphisms) and invertible braided monoidal natural transformations (as 2-morphisms). We denote its underlying commutative monoidal by $\sfc$. 

\enu
Let $T$ be a subset of $\bfc$. For convenience, from now on, we use the notation $\CA \in T$ to mean a braided fusion category $\CA$ whose braided monoidal equivalence class is in $T\subset \bfc$.
\enu

\begin{rem}
Note that the forgetful functor $\BFC \to \Fus$ induces surjective monoidal maps: 
$$
\bfc \twoheadrightarrow \bfcfus, \quad \nbfc \twoheadrightarrow \nbfcfus, \quad 
\sfc \twoheadrightarrow \sfcfus.
$$ 
The Drinfeld center construction defines a monoid map $\fus \to \nbfc$. 
\end{rem}

The classification of fusion categories amounts to a study of the commutative monoid $\fus$. Our approach is inspired by the two important equivalence relations among fusion categories characterized by Drinfeld centers, namely, the Morita equivalence of fusion categories and the Witt equivalence of nondegenerate braided fusion categories. We recall these notions below. 
\begin{itemize}
\item 
Two fusion categories $\CX,\CY \in \Fus$ are called \emph{Morita equivalent} (denoted by $\CX \mrt \CY$), if there exist a semisimple $\CX$-$\CY$-bimodule category $\CP$ and a semisimple $\CY$-$\CX$-bimodule category $\CQ$ such that 
$$
\CP \boxtimes_\CY \CQ \simeq \CX, \quad\quad \CQ \boxtimes_\CX \CP \simeq \CY
$$
as bimodule categories. It was known that $\CX \mrt \CY$ if and only if $\FZ_1(\CX) \breq \FZ_1(\CY)$ \cite{Mug031,ENO11}. 

\item 
Two nondegenerate braided fusion categories $\CA, \CB \in \NBFC$ are called \emph{Witt equivalent} \cite{DMNO} (denoted by $\CA \weq \CB$), if there exist $\CX$, $\CY \in \fus$ such that 
$$
\CA \boxtimes \FZ_1(\CX) \breq \CB \boxtimes \FZ_1(\CY)\,.
$$
We denote the Witt equivalence class of $\CA \in \nbfc$ by $[\CA]$. The set of Witt equivalence classes form an abelian group $\Witt$, called the Witt group. The Witt group $\Witt$ of nondegenerate braided fusion categories generalizes (in fact, contains) the classical Witt group of nondegenerate quadratic forms on finite abelian groups \cite{DGNO-BFC}.
\end{itemize}

\begin{rem}
We give a brief remark to our notations. The notation ``$\weq$'' for the Witt equivalence is motivated by the fact that the Witt equivalence can be viewed as some kind of ``2-Morita equivalence''. More precisely, the delooping of a braided fusion category $\CA$, denoted by $\Sigma\CA$, can be  identified with $\RMod_\CA(2\vect)$ \cite{DR18,GJF19}. For $\CA,\CB \in \NBFC$, $\CA \weq \CB$ if and only if $\Sigma\CA$ is Morita equivalent to $\Sigma\CB$, i.e., $\Sigma^2\CA = \RMod_{\Sigma \CA}(3\vect) \simeq \RMod_{\Sigma \CB}(3\vect) = \Sigma^2\CB$ as 3-categories \cite{JF22,KZ22}. This means that the Witt equivalence is indeed a higher Morita (or 2-Morita) equivalence. Moreover, $\Sigma^2$ assigns a nondegenerate braided fusion 1-category to an invertible separable 3-category, which is precisely a simple invertible object in $4\vect$ \cite{JF22,KZ22}. Therefore, the Witt group is precisely the group of invertible objects in $4\vect$. 
\end{rem}

\begin{rem}
It is worth mentioning that pairs of Morita equivalent unitary fusion categories can be obtained naturally from finite-index subfactors of finite depth, and vice-versa. See \cite{JMS14} for a survey and further references. 
\end{rem}

Let $S$ be a nonempty subset of $\fus$ that is closed under the multiplication $\boxtimes$. In Sections \ref{sec:eq-rel} and \ref{sec:gp-str}, we study the equivalence relations on $\fus$ defined by $S$. Note that one can always add the identity of $\fus$ to $S$ to upgrade it to a submonoid of $\fus$. 
\begin{expl}
We list a handful of submonoids of $\fus$ for future use. 

\bnu
\item Submonoids of $\bfc$.

\bnu
\item A submonoid of $\nbfc$  that is of particular interest is $S_0 \subset \nbfc$ consisting of braided monoidal equivalence classes of objects in $\NBFC$ that are braided equivalent to $\FZ_1(\CX)$ for some $\CX\in\Fus$. By an abuse of notation, we simply write
\begin{equation}
  \label{eq:1}
  S_0 := \FZ_1(\fus) = \{\FZ_1(\CX) \mid \CX \in \fus\}\,.
\end{equation}
Note that $[S_0] := \{[\vect]\}$ is the trivial subgroup of the Witt group $\Witt$. 

\item 
More generally, for any Witt subgroup $W' \subset \Witt$, one can consider its preimage in $\nbfc$, i.e., the submonoid 
$\{\CA \in \nbfc \mid [\CA] \in W'\}$.

\item Submonoids of $\sfc$ such as $\{\rep(G)\mid G \text{ is a finite group}\}$.
\enu

\item Submonoids of $\fus$ that are not contained in $\bfc$ are also worth studying.

\bnu
\item Since there are fusion categories with commutative fusion rules that admit no braiding (such as $\vect_{\BZ/3\BZ}^{\omega}$ with $\omega$ belonging to a nontrivial cohomology class), we have proper inclusions 
$$
\nbfcfus \subsetneq \bfcfus \subsetneq \cfrfus \subsetneq \fus\,.
$$

\item For a fixed prime $p$, the collection of all pointed fusion categories whose underlying group of invertible objects is a $p$-group;

\item Group theoretical fusion categories, i.e., fusion categories which are Morita equivalent to pointed fusion categories.

\item Other submonoids of $\fus$ characterized by dimensions, such as the submonoid of weakly-integral fusion categories, which is contained in the submonoid of pseudounitary fusion categories \cite[Prop.\ 8.24]{ENO05}.
\enu
\enu
\end{expl}

\section{Equivalence relations on (braided) fusion categories}\label{sec:eq-rel}

\subsection{Generalization of the Witt equivalence}\label{subsec:witt}
Let $S \subset \fus$ be a non-empty subset that is closed under the multiplication $\boxtimes$. We generalize the Witt equivalence on $\nbfc$ using $S$ as follows. The idea of generalizing the Witt equivalence first appeared in the physics literature \cite[Sec.\ VIII.D \& VIII.E]{KW14} and was formulated in physical language. 

\begin{defn}
Two nondegenerate braided fusion categories $\CA$ and $\CB$ are called \emph{$S$-Witt equivalent}, denoted by $\CA \weq_S \CB$, if there exist two fusion categories $\CP, \CQ\in S$ such that
\be \label{eq:S-Witt-eq-relation}
\CA \boxtimes \FZ_1(\CP) \breq \CB \boxtimes \FZ_1(\CQ).
\ee
\end{defn}

\begin{rem}
When $S=\fus$, the $\fus$-Witt equivalence is precisely the usual Witt equivalence. The $S$-Witt equivalence for $S=\nbfcfus$ is the mathematical reformulation of \cite[Def.\ 23]{KW14}. 
\end{rem}

\begin{lem}\label{lem:s-rel}
This relation $\weq_S$ is an equivalence relation on $\nbfc$, i.e., 
\bnu
\item[(1)] $\CA \weq_S \CA$; 
\item[(2)] $\CA \weq_S \CB$ implies that $\CB \weq_S \CA$; 
\item[(3)] $\CA \weq_S \CB$ and $\CB \weq_S \CC$ imply $\CA \weq_S \CC$. 
\enu
\end{lem}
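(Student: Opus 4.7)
The plan is to verify the three axioms directly from the definition, leveraging the fact that $S$ is nonempty and closed under $\boxtimes$, together with the multiplicativity of the Drinfeld center with respect to the Deligne tensor product.

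For reflexivity, I would pick any $\CP \in S$ (which exists since $S$ is nonempty) and take $\CP=\CQ$ in the definition; then $\CA \boxtimes \FZ_1(\CP) \breq \CA \boxtimes \FZ_1(\CP)$ trivially. Symmetry is immediate by swapping the roles of $\CP$ and $\CQ$ in \eqref{eq:S-Witt-eq-relation}.

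The substantive part is transitivity. Suppose $\CA \boxtimes \FZ_1(\CP_1) \breq \CB \boxtimes \FZ_1(\CQ_1)$ and $\CB \boxtimes \FZ_1(\CP_2) \breq \CC \boxtimes \FZ_1(\CQ_2)$ with $\CP_1,\CQ_1,\CP_2,\CQ_2 \in S$. The strategy is to tensor the first relation on the right with $\FZ_1(\CP_2)$ and the second with $\FZ_1(\CQ_1)$, then chain them together to get
\[
\CA \boxtimes \FZ_1(\CP_1) \boxtimes \FZ_1(\CP_2) \breq \CC \boxtimes \FZ_1(\CQ_1) \boxtimes \FZ_1(\CQ_2).
\]
To put this back into the form required by the definition, I invoke the braided equivalence $\FZ_1(\CX) \boxtimes \FZ_1(\CY) \breq \FZ_1(\CX \boxtimes \CY)$, which is a standard fact about Drinfeld centers. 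Setting $\CP := \CP_1 \boxtimes \CP_2$ and $\CQ := \CQ_1 \boxtimes \CQ_2$, which belong to $S$ by the closure assumption, yields $\CA \boxtimes \FZ_1(\CP) \breq \CC \boxtimes \FZ_1(\CQ)$.

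The only nonroutine ingredient is the multiplicativity $\FZ_1(\CX \boxtimes \CY) \breq \FZ_1(\CX) \boxtimes \FZ_1(\CY)$, but this is well known (e.g.\ \cite{EGNO}) and is not an obstacle here; everything else is bookkeeping. So I do not anticipate a genuinely hard step. The only subtlety worth flagging is ensuring that the $\CP$ and $\CQ$ produced in transitivity really do lie in $S$, which is exactly what the closure of $S$ under $\boxtimes$ is used for, and that the appeal to a nonempty $S$ is made at the reflexivity step.
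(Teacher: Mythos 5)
Your proof is correct and follows essentially the same route as the paper's: reflexivity and symmetry are immediate, and transitivity is obtained by tensoring the two given equivalences with the appropriate Drinfeld centers, chaining them, and using the multiplicativity $\FZ_1(\CX)\boxtimes\FZ_1(\CY)\breq\FZ_1(\CX\boxtimes\CY)$ together with the closure of $S$ under $\boxtimes$. Your explicit attention to where nonemptiness and closure of $S$ are used is a slight refinement of the paper's terser write-up, but the argument is the same.
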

\pf
(1) and (2) hold obviously. By the assumption of (3), we have $\CA \boxtimes \FZ_1(\CP) \breq \CB \boxtimes \FZ_1(\CQ)$ and $\CB \boxtimes \FZ_1(\CX) \breq \CC \boxtimes \FZ_1(\CY)$ for some $\CP,\CQ,\CX,\CY \in \fus$. Then we have
$$
\CA \boxtimes \FZ_1(\CP) \boxtimes \FZ_1(\CX) \breq \CB \boxtimes \FZ_1(\CQ) \boxtimes \FZ_1(\CX) \breq \CC \boxtimes \FZ_1(\CQ) \boxtimes \FZ_1(\CY), 
$$
which implies that $\CA \weq_S \CC$. 
\epf

We denote the $S$-Witt equivalence class, or simply the $S$-Witt class of $\CA$ by $[\CA]_S$. The set of $S$-Witt classes is denoted by $\Witt_S$, i.e., $\Witt_S=\nbfc /\hspace{-1mm} \weq_S$. Note that $\Witt_\fus = \Witt$ is the usual Witt group.

The following lemmas are direct consequences of the definitions.

\begin{lem}
The equivalence relation $\weq_S$ is compatible with $\boxtimes$. That is, if $\CA \weq_S \CA'$ and $\CB \weq_S \CB'$, then $\CA \boxtimes \CB \weq_S \CA' \boxtimes \CB'$. As a consequence, $\Witt_S$ is a commutative monoid. \qed
\end{lem}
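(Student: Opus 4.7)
The plan is to unwind the definitions of both $S$-Witt equivalences, tensor them together, and repackage the resulting equivalence using multiplicativity of the Drinfeld center and closure of $S$ under $\boxtimes$.

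First, by the assumption $\CA \weq_S \CA'$, there exist $\CP, \CQ \in S$ with a braided equivalence $\CA \boxtimes \FZ_1(\CP) \breq \CA' \boxtimes \FZ_1(\CQ)$; similarly, $\CB \weq_S \CB'$ yields $\CX, \CY \in S$ and a braided equivalence $\CB \boxtimes \FZ_1(\CX) \breq \CB' \boxtimes \FZ_1(\CY)$. Applying $\boxtimes$ to these two equivalences and using that $\boxtimes$ is symmetric monoidal on $\NBFC$, I get
\[
(\CA \boxtimes \CB) \boxtimes \bigl(\FZ_1(\CP) \boxtimes \FZ_1(\CX)\bigr) \breq (\CA' \boxtimes \CB') \boxtimes \bigl(\FZ_1(\CQ) \boxtimes \FZ_1(\CY)\bigr).
\]

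Next, I invoke the well-known multiplicativity of the Drinfeld center, namely $\FZ_1(\CP) \boxtimes \FZ_1(\CX) \breq \FZ_1(\CP \boxtimes \CX)$ and $\FZ_1(\CQ) \boxtimes \FZ_1(\CY) \breq \FZ_1(\CQ \boxtimes \CY)$ (a standard fact cited, e.g., in \cite{EGNO, DGNO-BFC}). Since $S$ is closed under $\boxtimes$ by hypothesis, both $\CP \boxtimes \CX$ and $\CQ \boxtimes \CY$ lie in $S$, so the displayed equivalence exhibits $\CA \boxtimes \CB \weq_S \CA' \boxtimes \CB'$, proving compatibility.

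For the consequence, compatibility shows that $\boxtimes$ descends to a well-defined binary operation on $\Witt_S = \nbfc/\!\weq_S$. Associativity, commutativity, and the unit property with respect to the class $[\vect]_S$ are inherited directly from the corresponding properties of $\boxtimes$ on $\nbfc$, so $\Witt_S$ is a commutative monoid. I do not anticipate any genuine obstacle here: the only nontrivial ingredient is the multiplicativity of $\FZ_1$, and the closure-under-$\boxtimes$ hypothesis on $S$ is exactly what ensures the resulting ``center term'' stays in the allowed class.
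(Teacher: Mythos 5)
Your proof is correct and fills in exactly the routine verification the paper omits (the lemma is stated there as a ``direct consequence of the definitions'' with no written proof): tensor the two defining equivalences, use $\FZ_1(\CP)\boxtimes\FZ_1(\CX)\breq\FZ_1(\CP\boxtimes\CX)$, and invoke closure of $S$ under $\boxtimes$. There is nothing to add.
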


\begin{lem}\label{lem:contain}
Let $S$ and $S'$ be nonempty subsets of $\fus$ that are closed under $\boxtimes$. If $S \subset S'$, then for all $\CA$, $\CB \in \nbfc$, $\CA\weq_{S} \CB$ implies $\CA \weq_{S'} \CB$. We have a canonical surjective monoid homomorphism $\Witt_S \twoheadrightarrow \Witt_{S'}$. \qed
\end{lem}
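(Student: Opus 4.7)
The plan is to unwind the definition of $S$-Witt equivalence and use the set containment $S \subset S'$ directly. Suppose $\CA \weq_S \CB$; by definition there exist $\CP, \CQ \in S$ with $\CA \boxtimes \FZ_1(\CP) \breq \CB \boxtimes \FZ_1(\CQ)$. Since $S \subset S'$, the same pair $\CP, \CQ$ lies in $S'$ and witnesses $\CA \weq_{S'} \CB$. This proves the first assertion.

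For the second assertion, I would define the candidate map $\varphi \colon \Witt_S \to \Witt_{S'}$ by $\varphi([\CA]_S) := [\CA]_{S'}$. The first assertion guarantees that $\varphi$ is well-defined on equivalence classes. It is a monoid homomorphism because the multiplications on both $\Witt_S$ and $\Witt_{S'}$ are induced from the same operation $\boxtimes$ on $\nbfc$ (via the preceding lemma asserting that $\weq_S$ is compatible with $\boxtimes$), and both identities are represented by $\vect$, so $\varphi([\CA \boxtimes \CB]_S) = [\CA \boxtimes \CB]_{S'} = [\CA]_{S'} \boxtimes [\CB]_{S'}$. Surjectivity is immediate: any class in $\Witt_{S'}$ has the form $[\CA]_{S'}$ for some $\CA \in \nbfc$, and it is the image of $[\CA]_S$.

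I do not anticipate any real obstacle: the lemma is a formal consequence of the definition of $\weq_S$ together with the already-established fact that $\Witt_S$ is a commutative monoid under $\boxtimes$. The only thing to keep track of is that the witnesses $\CP$ and $\CQ$ for the $S$-equivalence automatically serve as witnesses for the $S'$-equivalence; everything else is bookkeeping.
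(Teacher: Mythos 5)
Your proof is correct and matches the paper's intent exactly: the paper states this lemma without proof as a direct consequence of the definitions, and your unwinding of the witnesses $\CP, \CQ \in S \subset S'$ plus the well-definedness check for the induced map is precisely the routine verification being omitted. Nothing to add.
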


If $\CA \weq_{S} \vect$, then by \cite[Prop.\ 5.8]{DMNO}, $\CA \breq \FZ_{1}(\CF)$ for some fusion category $\CF$, where $\CF$ is not necessarily in $S$. However, the proof of \cite[Prop.\ 5.8]{DMNO} can be used to show that in certain cases we do have $\CF \in S$. For example, we have the following observations.

\begin{lem}
  Let $S_{\pu}\subset \fus$  be the submonoid consisting of pseudounitary fusion categories. For all $\CA \in \nbfc$, $\CA\weq_{S_{\pu}} \vect$ if and only if $\CA \breq \FZ_1(\CX)$ for some $\CX \in S_{\pu}$.
\end{lem}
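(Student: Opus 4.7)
The plan is a dimension-tracking argument combined with the existence theorem \cite[Prop.\ 5.8]{DMNO}. The key observations are: (i) the equality $\FPdim = \dim$ characterizing pseudounitarity is multiplicative under $\boxtimes$, and (ii) for any fusion category $\CF$, $\FPdim(\FZ_1(\CF)) = \FPdim(\CF)^2$ and $\dim(\FZ_1(\CF)) = \dim(\CF)^2$, so (using positivity of both quantities) pseudounitarity of $\FZ_1(\CF)$ is equivalent to pseudounitarity of $\CF$.

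The ``if'' direction is immediate: $\vect \in S_\pu$ since $\FPdim(\vect) = \dim(\vect) = 1$. Given $\CA \breq \FZ_1(\CX)$ with $\CX \in S_\pu$, take $\CP := \vect$ and $\CQ := \CX$ in the definition of $\weq_{S_\pu}$ to get $\CA \boxtimes \FZ_1(\vect) \breq \vect \boxtimes \FZ_1(\CX)$.

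For the ``only if'' direction, suppose $\CA \boxtimes \FZ_1(\CP) \breq \FZ_1(\CQ)$ for some $\CP, \CQ \in S_\pu$. Applying the dimension formulas in (ii) to both sides yields
\[
\FPdim(\CA)\cdot \FPdim(\CP)^2 = \FPdim(\CQ)^2, \qquad \dim(\CA)\cdot \dim(\CP)^2 = \dim(\CQ)^2.
\]
Pseudounitarity of $\CP$ and $\CQ$ gives $\FPdim(\CP) = \dim(\CP)$ and $\FPdim(\CQ) = \dim(\CQ)$, so dividing these equations forces $\FPdim(\CA) = \dim(\CA)$. By Lemma~\ref{lem:contain}, the assumption $\CA \weq_{S_\pu} \vect$ entails the usual Witt equivalence $\CA \weq \vect$, so \cite[Prop.\ 5.8]{DMNO} supplies a fusion category $\CF$ with $\CA \breq \FZ_1(\CF)$. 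Then $\FPdim(\CF)^2 = \FPdim(\CA) = \dim(\CA) = \dim(\CF)^2$, and positivity of $\FPdim(\CF)$ and $\dim(\CF)$ forces $\FPdim(\CF) = \dim(\CF)$. Hence $\CF \in S_\pu$, and setting $\CX := \CF$ completes the proof.

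There is no genuine obstacle: \cite[Prop.\ 5.8]{DMNO} is used purely as a black-box existence statement, and the $\CF$ it produces need not bear any direct relation to $\CP$ or $\CQ$. The argument succeeds precisely because pseudounitarity of $\CA$ is forced by pseudounitarity of $\CP$ and $\CQ$, and pseudounitarity then transfers automatically, via observation (ii), to any fusion category whose Drinfeld center realizes $\CA$.
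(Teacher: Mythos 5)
Your proof is correct and follows essentially the same route as the paper's: both directions are handled identically, with the forward direction using the multiplicativity of $\dim$ and $\FPdim$ together with $\dim(\FZ_1(\CF)) = \dim(\CF)^2$ and $\FPdim(\FZ_1(\CF)) = \FPdim(\CF)^2$ to force $\dim(\CA) = \FPdim(\CA)$, then invoking \cite[Prop.\ 5.8]{DMNO} and positivity of the square roots to transfer pseudounitarity to $\CX$. The only cosmetic difference is that you explicitly cite Lemma~\ref{lem:contain} to justify applying the DMNO proposition, which the paper leaves implicit.
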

\pf
If $\CA \weq_{S_{\pu}} \vect$, there exist pseudounitary fusion categories $\CB$, $\CC \in S_{\pu}$ such that $\CA \boxtimes \FZ_{1}(\CB) \breq \FZ_{1}(\CC)$. Then $\dim(\CA) = \frac{\dim(\CB)^2}{\dim(\CC)^2} = \frac{\FPdim(\CB)^2}{\FPdim(\CC)^2} = \FPdim(\CA)$. Moreover, by \cite[Prop.~5.8]{DMNO}, there exist a fusion category $\CX$ such that $\CA \breq \FZ_1(\CX)$. Then $\dim(\CX)$ and $\FPdim(\CX)$ are positive square roots of $\dim(\CA)$ and $\FPdim(\CA)$ respectively, so we have $\dim(\CX) = \FPdim(\CX)$, i.e., $\CX \in S_{\pu}$. Conversely, if $\CA \breq \FZ_1(\CX)$ for some $\CX \in S_{\pu}$, then $\CA \weq_{S_{\pu}} \vect$ by definition.
\epf

\begin{lem}
Let $S_{\gt} \subset \fus$ be the submonoid consisting of group-theoretical fusion categories. For all $\CA \in \nbfc$, then $\CA \weq_{S_{\gt}} \vect$ if and only if $\CA \breq \FZ_{1}(\CP)$ for some group-theoretical fusion category $\CP \in S_{\gt}$.
\end{lem}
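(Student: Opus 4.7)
The plan is as follows. The ``if'' direction is immediate: given $\CA \breq \FZ_1(\CP)$ with $\CP \in S_\gt$, taking $\CB = \vect \in S_\gt$ in the definition of $\weq_{S_\gt}$ yields $\CA \weq_{S_\gt} \vect$.

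For the ``only if'' direction, I would first reduce to pointed witnesses. Assuming $\CA \boxtimes \FZ_1(\CB) \breq \FZ_1(\CC)$ with $\CB, \CC \in S_\gt$, the Drinfeld center is a Morita invariant and every group-theoretical fusion category is by definition Morita equivalent to a pointed fusion category, so I may replace $\CB$ and $\CC$ by pointed representatives $\vect_H^\eta$ and $\vect_K^\zeta$ for some finite groups $H, K$ and normalized $3$-cocycles $\eta \in Z^3(H, \Cb^\times)$, $\zeta \in Z^3(K, \Cb^\times)$. The hypothesis then reads
\[
\CA \boxtimes \FZ_1(\vect_H^\eta) \breq \FZ_1(\vect_K^\zeta).
\]
Following the strategy of the preceding lemma, I next invoke \cite[Prop.~5.8]{DMNO} to obtain a fusion category $\CX$ with $\CA \breq \FZ_1(\CX)$, and combine this with the displayed equivalence to deduce $\FZ_1(\CX \boxtimes \vect_H^\eta) \breq \FZ_1(\vect_K^\zeta)$, so that $\CX \boxtimes \vect_H^\eta \mrt \vect_K^\zeta$; in particular $\CX \boxtimes \vect_H^\eta$ is group-theoretical.

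The principal obstacle is then to deduce that $\CX$ itself is group-theoretical. Equivalently, the given braided equivalence realizes $\CA$ as the centralizer of the nondegenerate braided fusion subcategory $\FZ_1(\vect_H^\eta) \subset \FZ_1(\vect_K^\zeta)$, and it suffices to show that $\CA \breq \FZ_1(\vect_G^\omega)$ for some finite group $G$ and normalized $3$-cocycle $\omega \in Z^3(G, \Cb^\times)$, after which taking $\CP := \vect_G^\omega \in S_\gt$ completes the argument. I would establish this by combining two ingredients: the characterization of group-theoretical nondegenerate braided fusion categories as exactly those containing a Tannakian Lagrangian subcategory, together with the known classification of braided fusion subcategories of Drinfeld centers of pointed fusion categories (cf.~\cite{DGNO-BFC}), according to which every such subcategory is itself braided equivalent to the Drinfeld center of some pointed fusion category. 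As a more hands-on alternative (should the citation prove too indirect), one could transport the canonical Lagrangian algebra $\Fun(K) \in \FZ_1(\vect_K^\zeta)$ to a Lagrangian algebra $L$ in $\CA \boxtimes \FZ_1(\vect_H^\eta)$ and ``push it down'' to a Lagrangian algebra in $\CA \simeq \Mod^0_{\1 \boxtimes \Fun(H)}(\CA \boxtimes \FZ_1(\vect_H^\eta))$ whose module category is still Morita equivalent to a pointed fusion category, tracing the pointed quotient structure through the de-equivariantization.
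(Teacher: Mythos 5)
Your proof is correct in its essentials but reaches the conclusion by a genuinely different route from the paper. The paper stays inside the \'etale-algebra formalism: by the proof of \cite[Prop.~5.8]{DMNO} one realizes $\CA \breq \FZ_1(\CC)^0_A$ for a connected \'etale algebra $A \in \FZ_1(\CC)$, and then quotes \cite[Thm.~3.16]{DS17}, which says that the local modules over such an algebra in the center of a group-theoretical category form the center of a pointed category. You instead use M\"uger's centralizer theorem to embed $\CA$ as a nondegenerate fusion subcategory of $\FZ_1(\vect_K^\zeta)$ and then appeal to the structure theory of group-theoretical categories. Your route is arguably more elementary in that it avoids \'etale algebras entirely; it also renders the detour through \cite[Prop.~5.8]{DMNO} and the reduction of $\CB,\CC$ to pointed representatives unnecessary, since all that is used is that $\FZ_1(\CC)$ is group-theoretical as a fusion category (being Morita equivalent to $\CC\boxtimes\overline{\CC}$).

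One ingredient is misstated, although the correct version suffices. It is not true that every braided fusion subcategory of the Drinfeld center of a pointed fusion category is again the center of a pointed fusion category: the Tannakian subcategory $\rep(K)\subset\FZ_1(\vect_K)$ is degenerate for $|K|>1$ and is a counterexample. What your argument actually needs is the combination of two standard facts: (i) a fusion subcategory of a group-theoretical fusion category is group-theoretical, and (ii) a nondegenerate group-theoretical braided fusion category (automatically integral, hence modular with its canonical spherical structure) contains a Lagrangian subcategory and is therefore braided equivalent to $\FZ_1(\vect_G^\omega)$ for some finite group $G$ and $\omega\in Z^3(G,\Cb^\times)$ --- this is the Drinfeld--Gelaki--Nikshych--Ostrik characterization you invoke as ingredient (a). Applying (i) and (ii) to the centralizer $\CA$ of $\FZ_1(\vect_H^\eta)$ inside $\FZ_1(\vect_K^\zeta)$ closes the argument. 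Your ``hands-on alternative'' of transporting the canonical Lagrangian algebra is essentially the paper's proof in disguise and would need substantially more care to make precise; I would not rely on it as stated.
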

\pf
It suffices to show that if $\CA\weq_{S_{\gt}}\vect$, then $\CA \breq \FZ_{1}(\CP)$ for some $\CP \in S_{\gt}$. By assumption, there exist group-theoretical fusion categories $\CB$, $\CC \in S_{\gt}$ such that $\CA \boxtimes \FZ_{1}(\CB) \breq \FZ_{1}(\CC)$. 
By the proof of \cite[Prop.~5.8]{DMNO}, there exists a connected \'etale algebra $A \in \FZ_1(\CC)$ and a fusion category $\CP$ such that $\CA \breq \FZ_{1}(\CC)_{A}^{0} \breq \FZ_{1}(\CP)$, where $\FZ_1(\CC)_A^0$ stands for the category of local $A$-modules. So by \cite[Thm.\ 3.16]{DS17}, $\FZ_{1}(\CC)_{A}^{0} \breq \FZ_{1}(\CP)$ is braided equivalent to the Drinfeld center of a pointed fusion category, i.e., $\CP$ is group-theoretical.
\epf

\begin{rem}
The above lemma can also be derived from \cite[Prop.~9.7.9]{EGNO}, which essentially follows from the surjectivity of the free module functor (\cite[Sec.\ 3]{DMNO}) from a braided fusion category to the category of right modules of a connected \'etale algebra in the category.
\end{rem}

For $S\subsetneq \fus$, the canonical surjective monoid map $\Witt_S \twoheadrightarrow \Witt$ gives us a first look at the internal structure hidden in the problem of classifying all fusion categories. More precisely, two fusion categories are Morita equivalent if and only if they share the same Drinfeld center. In other words, the set of elements in the trivial Witt class gives the complete classification of the Morita classes of fusion categories. This inspires us to generalize the Morita equivalence on fusion categories.

\subsection{Generalization of the Morita equivalence}

\begin{defn}
Let $S\subset \fus$ be a nonempty subset closed under $\boxtimes $. Two fusion categories $\CL$ and $\CM$ are called \emph{$S$-Morita equivalent}, denoted by $\CL \mrt_S \CM$, if $\FZ_1(\CL) \weq_S \FZ_1(\CM)$, or equivalently, $\CL \mrt_S \CM$ if there exists $\CP,\CQ \in S \subset \fus$ such that 
$$
\CL \boxtimes \CP  \mrt  \CM \boxtimes \CQ\,.
$$ 
\end{defn}
When $S$ consists of only the unit of $\fus$, $\mrt_S$ is precisely the usual Morita equivalence $\mrt$.

\begin{lem}
The $S$-Morita equivalence is a well-defined equivalence relation.
\end{lem}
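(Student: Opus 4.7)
The plan is to first verify that the two formulations of $\mrt_S$ in the definition genuinely agree, and then to deduce the three equivalence-relation axioms by pulling back the corresponding properties of $\weq_S$ established in Lemma \ref{lem:s-rel} along the Drinfeld center construction.

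For the equivalence of the two formulations, I would use two standard facts recalled in Section 2: (i) the Drinfeld center is multiplicative, i.e.\ $\FZ_1(\CL \boxtimes \CP) \breq \FZ_1(\CL) \boxtimes \FZ_1(\CP)$ for all $\CL, \CP \in \fus$; and (ii) $\CX \mrt \CY$ if and only if $\FZ_1(\CX) \breq \FZ_1(\CY)$. Then $\FZ_1(\CL) \weq_S \FZ_1(\CM)$ unravels, by definition of $\weq_S$, to the existence of $\CP, \CQ \in S$ with
\[
\FZ_1(\CL) \boxtimes \FZ_1(\CP) \breq \FZ_1(\CM) \boxtimes \FZ_1(\CQ),
\]
which by (i) becomes $\FZ_1(\CL \boxtimes \CP) \breq \FZ_1(\CM \boxtimes \CQ)$, and by (ii) this is exactly $\CL \boxtimes \CP \mrt \CM \boxtimes \CQ$.

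Given this equivalence of formulations, the three axioms become routine. Reflexivity $\CL \mrt_S \CL$ follows from $\FZ_1(\CL) \weq_S \FZ_1(\CL)$, which is Lemma \ref{lem:s-rel}(1). Symmetry follows from Lemma \ref{lem:s-rel}(2) applied to $\FZ_1(\CL)$ and $\FZ_1(\CM)$. Transitivity follows from Lemma \ref{lem:s-rel}(3): if $\FZ_1(\CL) \weq_S \FZ_1(\CM)$ and $\FZ_1(\CM) \weq_S \FZ_1(\CN)$, then $\FZ_1(\CL) \weq_S \FZ_1(\CN)$, so $\CL \mrt_S \CN$.

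There is no real obstacle here; the only thing one has to be slightly careful about is that $S$ is a subset of $\fus$ (not of $\nbfc$), so the witnesses used in the definition of $\weq_S$ for the pair $(\FZ_1(\CL), \FZ_1(\CM))$ must be of the form $\FZ_1(\CP)$ with $\CP \in S$ — but this is precisely how the $\weq_S$ relation is set up, so the pullback along $\FZ_1$ works without adjustment.
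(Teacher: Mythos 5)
Your proof is correct and follows the same route as the paper: the paper's proof is just the one-line observation that the statement follows from the definition and Lemma \ref{lem:s-rel}, which is exactly the pullback-along-$\FZ_1$ argument you spell out. Your additional verification that the two formulations in the definition agree (via multiplicativity of the Drinfeld center and the characterization of Morita equivalence by centers) is the content the paper leaves implicit in the phrase ``or equivalently'' within the definition itself, and is a worthwhile elaboration but not a different approach.
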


\begin{proof}
  This follows from the definition and Lemma \ref{lem:s-rel}.
\end{proof}

\begin{lem}\label{lem:embed}
The map $\fus/\hspace{-1mm}\mrt_S \, \to \Witt_S$ defined by sending the $S$-Morita equivalence class of any fusion category $\CX$ to $[\FZ_1(\CX)]_S$ is an injective homomorphism of commutative monoids.
\end{lem}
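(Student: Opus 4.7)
The lemma is essentially a consequence of the way $\mrt_S$ has been defined, so the proof plan is quite short and the main task is to organize the logic cleanly.

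First I would check well-definedness of the map. If $\CX \mrt_S \CY$, then by the definition of $S$-Morita equivalence we have $\FZ_1(\CX) \weq_S \FZ_1(\CY)$, which is exactly the statement $[\FZ_1(\CX)]_S = [\FZ_1(\CY)]_S$. Hence the assignment $[\CX]_{\mrt_S} \mapsto [\FZ_1(\CX)]_S$ does not depend on the representative.

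Next, the monoid homomorphism property reduces to two standard facts about the Drinfeld center: the canonical braided equivalence $\FZ_1(\CX \boxtimes \CY) \breq \FZ_1(\CX) \boxtimes \FZ_1(\CY)$ for $\CX, \CY \in \fus$, and $\FZ_1(\vect) \breq \vect$. The first sends the product of $S$-Morita classes to the $\boxtimes$-product of the corresponding $S$-Witt classes, and the second sends the unit to the unit. Together these show the map is a homomorphism of commutative monoids.

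Finally, injectivity is essentially tautological given the way $\mrt_S$ was introduced. Suppose $[\FZ_1(\CX)]_S = [\FZ_1(\CY)]_S$ in $\Witt_S$. Then $\FZ_1(\CX) \weq_S \FZ_1(\CY)$ by definition of the $S$-Witt classes, and this is precisely the defining condition for $\CX \mrt_S \CY$. Therefore $[\CX]_{\mrt_S} = [\CY]_{\mrt_S}$ in $\fus/\mrt_S$.

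There is no real obstacle here; the only substantive input is the compatibility $\FZ_1(\CX \boxtimes \CY) \breq \FZ_1(\CX) \boxtimes \FZ_1(\CY)$, which is classical and can be cited from \cite{EGNO}. The reason the lemma works so cleanly is that $\mrt_S$ was deliberately defined so that the Drinfeld center descends to an \emph{injection} into $\Witt_S$, with the trivial $S$-Morita class corresponding to the trivial $S$-Witt class $[S_0]_S$.
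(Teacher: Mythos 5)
Your proof is correct and follows essentially the same route as the paper, which simply states that the lemma ``follows directly from the definition'' of $\mrt_S$ (namely $\CL \mrt_S \CM \iff \FZ_1(\CL) \weq_S \FZ_1(\CM)$); you have merely unpacked the tautology and supplied the standard multiplicativity $\FZ_1(\CX \boxtimes \CY) \breq \FZ_1(\CX) \boxtimes \FZ_1(\CY)$ for the homomorphism property. The only cosmetic slip is your final reference to the trivial $S$-Witt class as ``$[S_0]_S$'' rather than $[\vect]_S$, but this does not affect the argument.
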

\begin{proof}
This follows directly from the definition.
\end{proof}




Therefore, we can identify $\fus/\hspace{-1mm}\mrt_S$ with the commutative submonoid  $\{[\FZ_1(\CX)]_S\mid \CX \in \fus\} \subset \Witt_S$. We provide more details in the next section. 

\begin{rem}
Among many choices of $S$, there seems to be a natural one $S=S_0$ in Eq.\ \eqref{eq:1}. In particular, we show in Corollary \ref{cor:grp} that $\fus/\hspace{-1mm}\mrt_{S_0}$ is a group. We suspect that this group is useful in the program of classifying fusion categories.  
\end{rem}

\section{Group structure from \texorpdfstring{$S$}{}-Witt equivalence}\label{sec:gp-str}
In this section, we study conditions for $\Witt_S$ to be a group. 
\begin{lem} \label{lem:action_S}
Let $S \subset \fus$ be a nonempty subset closed under $\boxtimes $. If $\nbfc$ acts on $S$ by $\boxtimes$, i.e., $\boxtimes(\nbfc \times S)=S$, then $\Witt_S$ is an abelian group. 
\end{lem}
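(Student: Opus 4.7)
The plan is to establish that $[\vect]_S$ is a two-sided identity for $(\Witt_S, \boxtimes)$, that the operation is commutative and associative (all of which follows routinely from the earlier lemma on compatibility of $\weq_S$ with $\boxtimes$), and then to exhibit an explicit inverse for each class. Concretely, I will show that for any $\CA \in \nbfc$, the class $[\overline{\CA}]_S$ is an inverse to $[\CA]_S$, where $\overline{\CA}$ denotes $\CA$ with the reversed braiding (which is again nondegenerate).

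The key input is the standard identification
\be\label{eq:center-of-nd}
\FZ_1(\CA) \breq \CA \boxtimes \overline{\CA}
\ee
for $\CA \in \NBFC$, where on the right $\CA$ is regarded as an object of $\Fus$ via the forgetful 2-functor $\NBFC \to \Fus$. Given this, the argument for inverses runs as follows. Since $S$ is nonempty, pick any $\CP \in S$. The hypothesis $\boxtimes(\nbfc \times S) = S$ guarantees that the (monoidal equivalence class of the) fusion category $\CA \boxtimes \CP$ again lies in $S$; set $\CQ := \CA \boxtimes \CP \in S$. Then
\[
\CA \boxtimes \overline{\CA} \boxtimes \FZ_1(\CP) \breq \FZ_1(\CA) \boxtimes \FZ_1(\CP) \breq \FZ_1(\CA \boxtimes \CP) = \FZ_1(\CQ) \breq \vect \boxtimes \FZ_1(\CQ),
\]
where the first equivalence uses \eqref{eq:center-of-nd} and the second uses that $\FZ_1$ is monoidal with respect to $\boxtimes$. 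By the definition of $\weq_S$ with the pair $(\CP, \CQ) \in S \times S$, this shows $\CA \boxtimes \overline{\CA} \weq_S \vect$, i.e., $[\CA]_S \cdot [\overline{\CA}]_S = [\vect]_S$ in $\Witt_S$.

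Commutativity and associativity of $\boxtimes$ in $\Witt_S$ are immediate because $\boxtimes$ is symmetric monoidal on $\BFC$ and descends to equivalence classes by the compatibility lemma for $\weq_S$. The identity axiom $[\vect]_S \cdot [\CA]_S = [\CA]_S$ follows from $\vect \boxtimes \CA \breq \CA$. Combined with the inverse construction above, this shows $\Witt_S$ is an abelian group.

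I do not foresee any substantive obstacle here: the only nontrivial step is invoking \eqref{eq:center-of-nd}, which is a well-known consequence of nondegeneracy (the Drinfeld center of a nondegenerate braided fusion category splits as $\CA \boxtimes \overline{\CA}$), and the only place the hypothesis on $S$ is used is to guarantee that the auxiliary fusion category $\CA \boxtimes \CP$ witnessing the Drinfeld-center decomposition actually lies in $S$. This is precisely what $\boxtimes(\nbfc \times S) = S$ buys us. One could phrase the proof even more symmetrically by taking $\CP = \CQ'$ for any fixed $\CQ' \in S$ and absorbing the $\CA$-twist into the right-hand side; both forms yield the same conclusion.
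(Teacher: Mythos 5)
Your proposal is correct and follows essentially the same route as the paper: both hinge on $\FZ_1(\CA) \breq \CA \boxtimes \CA^{\rev}$ together with monoidality of $\FZ_1$, and use the hypothesis $\boxtimes(\nbfc \times S) = S$ only to place the witness $\CA \boxtimes \CP$ in $S$. The only cosmetic difference is notational — the paper writes $\CA^{\rev}$ for the reversed braiding and reserves $\overline{\CA}$ for the monoidal opposite, whereas you use $\overline{\CA}$ for the former (but you define it explicitly, so no harm done).
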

\pf
For $\CA \in \nbfc$ and $\CP \in S$, we have 
$\CA \boxtimes \CA^\rev \boxtimes \FZ_1(\CP) \breq \FZ_1(\CA\boxtimes \CP)$. Hence, $\CA \boxtimes \CA^\rev \weq_S \vect$. 
\epf

\begin{expl}
Note that $\nbfcfus$, $\bfcfus$, $\cfrfus$ and $\fus$ are examples of $S$ that are closed under the $\nbfc$-action. By Lemma\,\ref{lem:action_S}, we obtain abelian groups $\Witt_{\nbfcfus}, \Witt_{\bfcfus}$, $\Witt_{\cfrfus}$ and $\Witt_{\fus}=\Witt$. 
\end{expl}


Tautologically, $\Witt_S$ is a group if and only if there exists
\[\varphi_S: \nbfc \to \nbfc \times S\,,\quad \CA \mapsto (\varphi_S(\CA)_1, \varphi_S(\CA)_2)\] 
such that the induced map 
\[\tilde{\varphi}_S: \nbfc \to \nbfc\,, \quad \CA \mapsto \CA \boxtimes \varphi_S(\CA)_1 \boxtimes \FZ_{1}(\varphi_S(\CA)_2)\]
satisfies $\tilde{\varphi}_S(\nbfc) \subseteq \FZ_{1}(S)$.

\begin{lem}\label{lem:trival-center}
Let $S$ be a submonoid of $\fus$. If $\CX \in S$, then $\FZ_1(\CX) \weq_S  \vect$.
\end{lem}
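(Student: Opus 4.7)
The plan is to unpack the definition of $\weq_S$ and exhibit explicit witnesses $\CP,\CQ\in S$ satisfying the required braided equivalence. Recall that $\CA\weq_S \CB$ means there exist $\CP,\CQ\in S$ with $\CA\boxtimes \FZ_1(\CP)\breq \CB\boxtimes \FZ_1(\CQ)$, so I need to produce such a pair with $\CA=\FZ_1(\CX)$ and $\CB=\vect$.

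The key observation is that, since $S$ is a submonoid of $\fus$, it contains the monoidal unit $\vect\in\fus$. Together with the hypothesis $\CX\in S$, this gives me two canonical candidates inside $S$, namely $\CP:=\vect$ and $\CQ:=\CX$. First I would note that $\FZ_1(\vect)\breq \vect$, so the left-hand side of the required equivalence reduces to
\[
\FZ_1(\CX)\boxtimes \FZ_1(\vect)\breq \FZ_1(\CX)\boxtimes\vect\breq \FZ_1(\CX),
\]
while the right-hand side becomes
\[
\vect\boxtimes \FZ_1(\CX)\breq \FZ_1(\CX).
\]
Both sides agree tautologically, which immediately yields $\FZ_1(\CX)\weq_S \vect$.

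There is no real obstacle here: the content of the lemma is only that $S$ is closed under the monoid operation and contains $\vect$, which is exactly what "submonoid" provides. The lemma is essentially a sanity check showing that the $S$-Witt class of $\vect$ absorbs every Drinfeld center built from an object of $S$, justifying the intuition that $\FZ_1(S)$ sits inside the trivial $S$-Witt class. If one wanted a slightly more symmetric presentation, one could instead take $\CP=\CX$ and $\CQ=\vect$ and use $\FZ_1(\CX)\boxtimes\FZ_1(\CX)^{\mathrm{rev}}$ style manipulations, but the one-line argument above is the cleanest and uses only the hypothesis that $\CX\in S$ and that $S$ is closed under the identity element.
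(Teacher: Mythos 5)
Your proof is correct and is essentially identical to the paper's own one-line argument: both take $\CP=\vect$ and $\CQ=\CX$, using that a submonoid contains $\vect$, and observe that both sides of the defining equivalence reduce to $\FZ_1(\CX)$. Nothing further is needed.
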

\begin{proof}
By assumption, $\vect \in S$, so $\FZ_1(\CX) \bt \FZ_1(\vect) \breq \vect \bt \FZ_1(\CX)$.  
\end{proof}


Let $S \subset \fus$ be a submonoid. By Lemma\,\ref{lem:contain}, we obtain a short exact sequence of commutative monoids:
\begin{align*}
0 \to \ker(\omega_S) \to \Witt_S &\xrightarrow{\omega_S} \Witt \to 0 \\
[\CA]_S &\mapsto [\CA]
\end{align*}
where $[\CA]$ denotes the Witt class of $\CA$. Note that the submonoid
\[
\ker(\omega_S) = \{\, [\FZ_1(\CX)]_S\, |\, \mbox{$\CX$ is a fusion category}\} \cong (\fus/\hspace{-0.1cm}\mrt_S)
\]
encodes the information of the hidden structure of the trivial Witt class $[\vect]$. 



The main theorem of this section is the following list of equivalent conditions for $\Witt_S$ to be a group.

\begin{thm}\label{thm:witt-s-group}
Let $S$ be a submonoid of $\fus$. The following statements are equivalent.
\bnu
\item[(1)] For all $\CA \in \nbfc$, $[\CA]_S$ has an inverse, and $[\CA]^{-1}_S = [\CA^{\rev} \bt \FZ_1(\CX)]_S$ for some $\CX \in \fus$.
\item[(2)] $\Witt_S$ is a group.
\item[(3)] $\ker(\omega_S)$ is a group.
\item[(4)] for all $\CX \in \fus$, there exists $\CY \in \fus$ and $\CP, \CQ \in S$ such that $\CX \boxtimes \CY \boxtimes \CP \mrt \CQ$.
\item[(5)] for all $\CX \in \fus$, there exists $\CY \in \fus$ and $\CQ \in S$ such that $\CX \boxtimes \CY \mrt \CQ$.
\enu
\end{thm}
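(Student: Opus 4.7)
The plan is to establish the equivalences via a cycle
\[
(1) \Rightarrow (2) \Rightarrow (3) \Rightarrow (4) \Rightarrow (5) \Rightarrow (1),
\]
using two already-established ingredients repeatedly: the identity $\CA \boxtimes \CA^\rev \breq \FZ_1(\CA)$ for $\CA \in \nbfc$, and the characterization $\CX \mrt \CY \iff \FZ_1(\CX) \breq \FZ_1(\CY)$ recalled earlier in the paper, together with Lemma~\ref{lem:trival-center}.

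The easy implications are $(1) \Rightarrow (2)$, which is immediate from the definition of a group, and $(4) \Rightarrow (5)$, obtained by absorbing $\CP$ into $\CY$. For $(2) \Rightarrow (3)$: a submonoid of a group is a group as soon as it is closed under inverses, so I need to check that if $[\FZ_1(\CX)]_S \in \ker(\omega_S)$ has $\Witt_S$-inverse $[\CA]_S$, then $\CA$ is itself braided equivalent to a Drinfeld center. This is exactly where I invoke \cite[Prop.\ 5.8]{DMNO}: the image $\omega_S([\CA]_S)$ is inverse to the trivial Witt class, hence trivial, so $\CA \breq \FZ_1(\CY)$ for some fusion $\CY$, and $[\CA]_S = [\FZ_1(\CY)]_S \in \ker(\omega_S)$.

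For $(3) \Rightarrow (4)$, given $\CX \in \fus$, pick the inverse of $[\FZ_1(\CX)]_S$ in $\ker(\omega_S)$, which by definition of $\ker(\omega_S)$ is of the form $[\FZ_1(\CY)]_S$. Unfolding $[\FZ_1(\CX) \boxtimes \FZ_1(\CY)]_S = [\vect]_S$ produces $\CP,\CQ \in S$ with
\[
\FZ_1(\CX \boxtimes \CY \boxtimes \CP) \;\breq\; \FZ_1(\CX) \boxtimes \FZ_1(\CY) \boxtimes \FZ_1(\CP) \;\breq\; \FZ_1(\CQ),
\]
which is precisely $\CX \boxtimes \CY \boxtimes \CP \mrt \CQ$.

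The main step, and the one I expect to require the most care, is $(5) \Rightarrow (1)$, because it is the only implication that genuinely produces inverses in $\Witt_S$ of arbitrary nondegenerate classes from a purely Morita-theoretic hypothesis about fusion categories. Given $\CA \in \nbfc$, apply $(5)$ to the underlying fusion category to obtain $\CY \in \fus$ and $\CQ \in S$ with $\CA \boxtimes \CY \mrt \CQ$, i.e.\ $\FZ_1(\CA) \boxtimes \FZ_1(\CY) \breq \FZ_1(\CQ)$. Combining with $\CA \boxtimes \CA^\rev \breq \FZ_1(\CA)$ yields
\[
\CA \boxtimes (\CA^\rev \boxtimes \FZ_1(\CY)) \;\breq\; \FZ_1(\CA) \boxtimes \FZ_1(\CY) \;\breq\; \FZ_1(\CQ),
\]
and Lemma~\ref{lem:trival-center} applied to $\CQ \in S$ gives $[\FZ_1(\CQ)]_S = [\vect]_S$. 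Hence $[\CA^\rev \boxtimes \FZ_1(\CY)]_S$ is an inverse of $[\CA]_S$ of the exact form demanded in $(1)$, completing the cycle.
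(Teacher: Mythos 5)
Your proof is correct and follows essentially the same cycle $(1)\Rightarrow(2)\Rightarrow(3)\Rightarrow(4)\Rightarrow(5)\Rightarrow(1)$ as the paper, using the same key ingredients: $\FZ_1(\CA)\breq\CA\boxtimes\CA^{\rev}$, the characterization of Morita equivalence via Drinfeld centers, and Lemma~\ref{lem:trival-center}. The only cosmetic differences are that you spell out $(2)\Rightarrow(3)$ (which the paper dismisses as obvious) via \cite[Prop.\ 5.8]{DMNO}, and in $(5)\Rightarrow(1)$ you apply $(5)$ directly to the underlying fusion category of $\CA$ rather than first passing through $\ker(\omega_S)$ as the paper does; both amount to the same argument.
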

\begin{proof}
(1)$\Rightarrow$(2)$\Rightarrow$(3) and (4)$\Rightarrow$(5) are obvious.

(3)$\Rightarrow$(4). Assume (3), then for all $\CX \in \fus$, the inverse of $[\FZ_1(\CX)]_S \in \ker(\omega_S)$ is of the form $[\FZ_1(\CY)]_S$ for some $\CY \in \fus$, i.e., $\FZ_1(\CX) \bt \FZ_1(\CY) \weq_S \vect$. By definition, this means there exist $\CP, \CQ \in S$ such that 
\begin{equation*}
\FZ_{1}(\CX) \boxtimes \FZ_{1}(\CY) \boxtimes \FZ_{1}(\CP) \breq \FZ_{1}(\CQ)\,, \end{equation*}
which implies $\CX \bt \CY \bt \CP \mrt \CQ$.

(5)$\Rightarrow$(1). For all $\CA \in \nbfc$, the nondegeneracy of the braiding of $\CA$ implies that
\[\omega_S([\CA \bt \CA^{\rev}]_S) = [\FZ_1(\CA)] = [\vect]\,,\]
which means $[\CA \bt \CA^{\rev}]_S \in \ker(\omega_S)$. So there exists $\CY \in \fus$ such that $\CA \bt \CA^{\rev} \weq_S \FZ_1(\CY)$. 
By (5) there exists $\CX \in \fus$ and $\CQ \in S$ such that 
\[\CA \bt \CA^{\rev} \bt \FZ_1(\CX) \weq_S \FZ_1(\CX \bt \CY) \breq \FZ_1(\CQ) \weq_S \vect\,,\]
where the $S$-Witt equivalence on the right hand side follows from Lemma \ref{lem:trival-center}.  Therefore, we have $[\CA]_S \cdot [\CA^{\rev} \bt \FZ_1(\CX)]_S = [\vect]_S$, and this completes the proof.
\end{proof}


\begin{cor}\label{cor:grp}
  Let $S$ be a submonoid of $\fus$ such that $S_0 \subset S$ (recall (\ref{eq:1}). Then $\ker(\omega_S)$ and $\Witt_S$ are both groups.  Moreover, for any $\CX \in \fus$, $[\FZ_1(\CX)]_S^{-1} = [\FZ_1(\overline{\CX})]_S = [\FZ_1(\CX)^{\rev}]_S \in \ker(\omega_S)$. In particular, $\ker(\omega_{S_0})$ is a group.
\end{cor}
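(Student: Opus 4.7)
The plan is to reduce everything to Theorem~\ref{thm:witt-s-group} by verifying its condition~(5) directly. For any $\CX \in \fus$, I would take $\CY := \overline{\CX}$ and $\CQ := \FZ_1(\CX)$; the hypothesis $S_0 \subset S$ guarantees $\CQ \in S$, so only the Morita equivalence $\CX \bt \overline{\CX} \mrt \FZ_1(\CX)$ requires justification.

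For this I would compute Drinfeld centers and chain a short sequence of standard braided equivalences:
\begin{equation*}
\FZ_1(\CX \bt \overline{\CX}) \,\breq\, \FZ_1(\CX) \bt \FZ_1(\overline{\CX}) \,\breq\, \FZ_1(\CX) \bt \FZ_1(\CX)^{\rev} \,\breq\, \FZ_1(\FZ_1(\CX)).
\end{equation*}
The first equivalence is the multiplicativity of $\FZ_1$ under $\bt$; the second uses the standard identification $\FZ_1(\overline{\CX}) \breq \FZ_1(\CX)^{\rev}$ (reversing the tensor product on $\CX$ reverses the braiding on its center); the third is M\"uger's factorization $\FZ_1(\CA) \breq \CA \bt \CA^{\rev}$ for any nondegenerate $\CA \in \nbfc$, applied here to $\CA = \FZ_1(\CX)$. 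By the characterization of Morita equivalence via the Drinfeld center, this verifies condition~(5) of Theorem~\ref{thm:witt-s-group}, so both $\Witt_S$ and $\ker(\omega_S)$ are groups.

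The explicit inverse formula then follows from a direct computation using the same chain:
\begin{equation*}
[\FZ_1(\CX)]_S \cdot [\FZ_1(\overline{\CX})]_S \,=\, [\FZ_1(\CX \bt \overline{\CX})]_S \,=\, [\FZ_1(\FZ_1(\CX))]_S \,=\, [\vect]_S,
\end{equation*}
where the last equality applies Lemma~\ref{lem:trival-center} together with $\FZ_1(\CX) \in S_0 \subset S$. Hence $[\FZ_1(\CX)]_S^{-1} = [\FZ_1(\overline{\CX})]_S$, and reusing $\FZ_1(\overline{\CX}) \breq \FZ_1(\CX)^{\rev}$ identifies this class with $[\FZ_1(\CX)^{\rev}]_S$. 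The ``In particular'' clause is the special case $S = S_0$.

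I do not anticipate a real obstacle: once Theorem~\ref{thm:witt-s-group} is in place, the content is essentially bookkeeping on top of classical facts about Drinfeld centers (multiplicativity under $\bt$, behaviour under the opposite tensor product, and M\"uger's factorization). The one point requiring care is that each equivalence in the displayed computation must be tracked as a \emph{braided} equivalence, so that the characterization $\CL \mrt \CM \iff \FZ_1(\CL) \breq \FZ_1(\CM)$ recalled earlier can legitimately be invoked.
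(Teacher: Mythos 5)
Your proof is correct and takes essentially the same route as the paper: the paper's entire argument is the one-line observation that $\CX \bt \overline{\CX} \mrt \FZ_1(\CX) \in S_0 \subset S$, which verifies condition (5) of Theorem~\ref{thm:witt-s-group}. You simply spell out the standard chain of braided equivalences behind that Morita equivalence and make the inverse formula explicit, which the paper leaves implicit.
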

\begin{proof}
For all $\CX \in \fus$, we have $\CX \bt \overline{\CX} \mrt \FZ_1(\CX) \in S_0 \subset S$, so we are done by Theorem \ref{thm:witt-s-group}.
\end{proof}

\begin{defn}
We say that a submonoid $S\subset\fus$ is {\em saturated}, if $\FZ_1(\CX)\weq_S\vect$ implies $\CX\in S$.
\end{defn}

In particular, if $S \subsetneq \fus$ and $S$ is saturated, then $S$-Witt equivalence is a stronger condition than the usual Witt equivalence. 

\begin{cor}
Let $S$ be a saturated submonoid of $\fus$. Then $\Witt_S$ is a group if and only if for all $\CX\in\fus$, there exists $\CY\in\fus$ such that $\CX\boxtimes\CY\in S$.
\end{cor}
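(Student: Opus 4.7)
The plan is to reduce the corollary to the equivalence (2)$\Leftrightarrow$(5) in Theorem~\ref{thm:witt-s-group} and then use the saturation hypothesis to upgrade the condition ``$\CX\boxtimes\CY\mrt\CQ$ for some $\CQ\in S$'' to the literal membership ``$\CX\boxtimes\CY\in S$''. Both directions are essentially bookkeeping once one identifies which result to invoke at which step.

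For the ``if'' direction, assume that for every $\CX\in\fus$ there exists $\CY\in\fus$ with $\CX\boxtimes\CY\in S$. Setting $\CQ:=\CX\boxtimes\CY\in S$, we trivially have $\CX\boxtimes\CY\mrt\CQ$, so condition (5) of Theorem~\ref{thm:witt-s-group} holds. Hence $\Witt_S$ is a group.

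For the ``only if'' direction, suppose $\Witt_S$ is a group. By Theorem~\ref{thm:witt-s-group}(5), for each $\CX\in\fus$ there exist $\CY\in\fus$ and $\CQ\in S$ such that $\CX\boxtimes\CY\mrt\CQ$, which means $\FZ_1(\CX\boxtimes\CY)\breq\FZ_1(\CQ)$. Since $\CQ\in S$, Lemma~\ref{lem:trival-center} gives $\FZ_1(\CQ)\weq_S\vect$, and therefore $\FZ_1(\CX\boxtimes\CY)\weq_S\vect$. Because $S$ is saturated, this forces $\CX\boxtimes\CY\in S$, as required.

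I do not anticipate any genuine obstacle here: the argument is a two-line chase through the definitions. The only conceptual point worth flagging is that the saturation hypothesis is used in exactly one place, namely to promote the relation $\FZ_1(\CX\boxtimes\CY)\weq_S\vect$ to the statement $\CX\boxtimes\CY\in S$; without saturation, one would only recover the weaker condition~(5) of Theorem~\ref{thm:witt-s-group}.
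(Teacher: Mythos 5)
Your proof is correct and follows the same route as the paper, which simply cites Theorem~\ref{thm:witt-s-group} and the definition of saturation; you have merely spelled out the details (condition (5) plus Lemma~\ref{lem:trival-center} to pass from $\CX\boxtimes\CY\mrt\CQ$ with $\CQ\in S$ to $\FZ_1(\CX\boxtimes\CY)\weq_S\vect$, then saturation to conclude membership). No gaps.
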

\begin{proof}
  The statement follows from Theorem \ref{thm:witt-s-group} and the definition.
\end{proof}

\begin{prop}
Let $S$ be a submonoid of $\fus$. There exists a saturated submonoid $\bar S\subset\fus$ such that $\weq_S = \weq_{\bar S}$. Moreover, we have $S'\subset\bar S$ for any submonoid $S'\subset\fus$ such that $\weq_S = \weq_{S'}$. We refer to $\bar S$ as the {\em saturated closure} of $S$. 
\end{prop}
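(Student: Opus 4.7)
The plan is to define the saturated closure as the largest set of fusion categories whose Drinfeld centers become trivial under $\weq_S$, namely
\[
\bar S \;:=\; \{\,\CX \in \fus \mid \FZ_1(\CX) \weq_S \vect\,\}.
\]
First I would verify $\bar S$ is a submonoid containing $S$. The unit $\vect$ lies in $\bar S$ since $\FZ_1(\vect) \breq \vect$. If $\CX,\CY \in \bar S$, then using $\FZ_1(\CX\boxtimes\CY)\breq \FZ_1(\CX)\boxtimes \FZ_1(\CY)$ together with the compatibility of $\weq_S$ with $\boxtimes$, one has $\FZ_1(\CX\boxtimes\CY)\weq_S \vect\boxtimes\vect = \vect$, so $\CX\boxtimes\CY \in \bar S$. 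The inclusion $S\subset\bar S$ is immediate from Lemma~\ref{lem:trival-center}.

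Next I would prove $\weq_S = \weq_{\bar S}$. The inclusion $\weq_S\,\subset\,\weq_{\bar S}$ is a direct consequence of $S\subset\bar S$ and Lemma~\ref{lem:contain}. For the converse, suppose $\CA\weq_{\bar S} \CB$, so there exist $\CP,\CQ\in\bar S$ with $\CA\boxtimes\FZ_1(\CP)\breq \CB\boxtimes\FZ_1(\CQ)$. By definition of $\bar S$, there are $\CP_1,\CP_2,\CQ_1,\CQ_2\in S$ such that
\[
\FZ_1(\CP)\boxtimes\FZ_1(\CP_1)\breq \FZ_1(\CP_2),\qquad
\FZ_1(\CQ)\boxtimes\FZ_1(\CQ_1)\breq \FZ_1(\CQ_2).
\]
Tensoring the equivalence $\CA\boxtimes\FZ_1(\CP)\breq \CB\boxtimes\FZ_1(\CQ)$ with $\FZ_1(\CP_1)\boxtimes\FZ_1(\CQ_1)$, substituting, and using $\FZ_1(\CP_2)\boxtimes\FZ_1(\CQ_1)\breq\FZ_1(\CP_2\boxtimes\CQ_1)$ (and symmetrically for the other side), yields
\[
\CA\boxtimes\FZ_1(\CP_2\boxtimes\CQ_1)\breq \CB\boxtimes\FZ_1(\CQ_2\boxtimes\CP_1),
\]
with $\CP_2\boxtimes\CQ_1,\ \CQ_2\boxtimes\CP_1\in S$. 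Hence $\CA\weq_S\CB$.

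Once equality of the two relations is established, saturation of $\bar S$ is automatic: if $\FZ_1(\CX)\weq_{\bar S}\vect$, then $\FZ_1(\CX)\weq_S\vect$ and so $\CX\in\bar S$ by definition. Finally, for the maximality clause, let $S'\subset\fus$ be any submonoid with $\weq_{S'}=\weq_S$. For $\CX\in S'$, Lemma~\ref{lem:trival-center} applied to $S'$ gives $\FZ_1(\CX)\weq_{S'}\vect$, hence $\FZ_1(\CX)\weq_S\vect$, so $\CX\in\bar S$; thus $S'\subset\bar S$.

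The construction is a standard closure argument, so there is no genuinely hard step; the only point requiring care is the equivalence $\weq_S = \weq_{\bar S}$, where one must remember to absorb the auxiliary categories $\CP_1,\CQ_1$ into both sides so that the witnesses can be rewritten as $\FZ_1$'s of objects of $S$. Everything else follows by unwinding the definitions.
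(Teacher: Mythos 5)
Your proposal is correct and follows essentially the same route as the paper: the same definition $\bar S = \{\CX \mid \FZ_1(\CX)\weq_S\vect\}$, with $S\subset\bar S$ from Lemma \ref{lem:trival-center}, saturation falling out of $\weq_S=\weq_{\bar S}$, and the maximality clause proved identically. Your explicit tensoring with $\FZ_1(\CP_1)\boxtimes\FZ_1(\CQ_1)$ just spells out what the paper compresses into the observation $\FZ_1(\CP)\weq_S\FZ_1(\CQ)\weq_S\vect$ together with compatibility of $\weq_S$ with $\boxtimes$ and transitivity, and you additionally check that $\bar S$ is a submonoid, which the paper leaves implicit.
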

\begin{proof}
Let $\bar S = \{\CX\in\fus \mid \FZ_1(\CX)\weq_S\vect \}$. Note that $S\subset\bar S$ by Lemma \ref{lem:trival-center}. If $\CA\weq_{\bar S}\CB$, i.e. $\CA\boxtimes\FZ_1(\CP) \breq \CB\boxtimes\FZ_1(\CQ)$ where $\CP,\CQ\in\bar S$, then $\FZ_1(\CP) \weq_S \FZ_1(\CQ) \weq_S \vect$ hence $\CA \weq_S \CB$. This shows that $\weq_S = \weq_{\bar S}$. In particular, if $\FZ_1(\CX) \weq_{\bar S} \vect$, then $\FZ_1(\CX) \weq_S \vect$ hence $\CX\in\bar S$. This shows that $\bar S$ is saturated.

Assume $\weq_{S'} = \weq_S$. If $\CX\in S'$, then $\FZ_1(\CX) \weq_S \vect$ by Lemma \ref{lem:trival-center} hence $\CX\in\bar S$. That is, $S'\subset\bar S$.
\end{proof}

\begin{prob}
(1) Find the saturated closure of $\sfcfus$, $\nbfcfus$ and $\bfcfus$.

(2) Is $S_0$ contained in every saturated submonoid $S\subset\fus$ such that $\Witt_S$ is a group?
\end{prob}

\begin{expl}
  By \cite{DNO}, the usual Witt group $\Witt$ has no odd torsion: the order an element in $\Witt$ is either a power of 2 which is less than or equal to 32, or infinity. On the contrary, there exists $S$ such that $\Witt_S$ has elements of finite odd order. For instance, let $n\ge2$ be an integer and let $\fus^n = \{\CX^{\boxtimes n}\mid\CX\in\fus\}$. Then $\fus^n$ is a submonoid of $\fus$ and $\Witt_{\fus^n}$ is a group by Theorem \ref{thm:witt-s-group}. Moreover, it is easy to see that $g^n=1$ for all $g\in\ker(\omega_{\fus^n})$, so when $n$ is odd, any nontrivial element in $\ker(\omega_{\fus^n})$ has odd order.

Here we give an explicit example. Let $S = \fus^3$ and $p$ a prime. Then for $\CX = \rep(\mathbb{Z}/p \mathbb{Z})$, we have $[\FZ_1(\CX)]_S \ne [\vect]_S$. Indeed, otherwise there would exist $\CL$, $\CM \in \fus$ such that $\FZ_1(\CX) \boxtimes \FZ_1(\CL)^{\boxtimes 3} \breq \FZ_1(\CM)^{\boxtimes 3}$, which implies $(\frac{\dim(\CM)}{\dim(\CL)})^3 = p$, as global dimensions of fusion categories are totally positive. However, this would imply that $\frac{\dim(\CM)}{\dim(\CL)} = \sqrt[3]{p}$, which is not totally positive, and so we get a contradiction. Now as is mentioned above, we must have $\ord([\FZ_1(\CX)]_S) = 3$.
\end{expl}

Given a fusion category $\CX$, we denote the positive square root of $\dim(\CX)$ by $\sqrt{\dim(\CX)}$. Note that $\sqrt{\dim(\CX)}$ is totally real, but it may not be totally positive. For example, if $\CX = \rep(\Zb/p\Zb)$ for some prime $p$, then $\sqrt{\dim(\CX)} = \sqrt{p}$ is not totally positive. Note also that in this case, $\CX \oeq \overline{\CX}$, as $\CX$ has a braiding.

Let $\Pi_{0} := \left\langle\left\{[\FZ_{1}(\rep(\Zb/p\Zb))]_{S_{0}}\mid p \text{ is prime}\right\}\right\rangle$ 
be the subgroup of $\ker(\omega_{S_{0}})$ generated by the $S_{0}$-Witt classes of Drinfeld centers of prime-order cyclic group representation categories. We collect several properties of $\ker(\omega_{S_{0}})$ as follows.

\begin{prop}\label{prop:S0}
\begin{itemize}
\item[(1)] If $\CA \in \nbfc$ satisfies $\CA\weq_{S_{0}}\vect$, then the positive square root of $\sqrt{\dim(\CA)}$ is totally positive.
\item[(2)] For all $\CX \in \fus$ such that $\CX \mrt \overline{\CX}$, we have $[\FZ_{1}(\CX)]^2_{S_{0}} = [\vect]_{S_{0}}$.
\item[(3)] For all $\CX \in \fus$ such that $\CX \mrt \overline{\CX}$ and $\sqrt{\dim(\CX)}$ is not totally positive, we have 
$$\left\langle[\FZ_{1}(\CX)]_{S_{0}}\right\rangle \simeq \Zb/2\Zb \subset \ker(\omega_{S_{0}})\,.$$
\item[(4)] $\Pi_{0} \simeq (\Zb/2\Zb)^{\oplus \Nb}\subset \ker(\omega_{S_{0}})$.
\end{itemize}
\end{prop}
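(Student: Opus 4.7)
The plan is to establish the four parts in order; (3) and (4) will fall out as corollaries of (1) and (2) combined with simple dimension counting.

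For (1), I would unwind the definition: $\CA \weq_{S_0} \vect$ (with $S_0 = \FZ_1(\fus)$) means there exist $\CX, \CY \in \fus$ with $\CA \bt \FZ_1(\FZ_1(\CX)) \breq \FZ_1(\FZ_1(\CY))$. Taking global dimensions and applying $\dim(\FZ_1(\CZ)) = \dim(\CZ)^2$ twice yields
\[
\dim(\CA) = \left(\frac{\dim(\CY)}{\dim(\CX)}\right)^4.
\]
Since $\dim(\CX)$ and $\dim(\CY)$ are totally positive algebraic integers, their ratio is totally positive, and this ratio is visibly the positive square root of $\sqrt{\dim(\CA)}$.

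For (2), the pivotal input is the identity $\CX \bt \overline{\CX} \mrt \FZ_1(\CX)$ for any fusion category $\CX$ (as already used in the proof of Corollary~\ref{cor:grp}). Combined with the hypothesis $\CX \mrt \overline{\CX}$, this gives $\CX \bt \CX \mrt \CX \bt \overline{\CX} \mrt \FZ_1(\CX)$, and applying $\FZ_1$ yields
\[
\FZ_1(\CX) \bt \FZ_1(\CX) \breq \FZ_1(\CX \bt \CX) \breq \FZ_1(\FZ_1(\CX)).
\]
Choosing $\CP = \vect$ and $\CQ = \FZ_1(\CX)$, both in $S_0$, in the definition of $\weq_{S_0}$ concludes $[\FZ_1(\CX)]_{S_0}^2 = [\vect]_{S_0}$.

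Part (3) is then immediate: (2) gives order dividing $2$, and if $[\FZ_1(\CX)]_{S_0} = [\vect]_{S_0}$ then applying (1) to $\CA = \FZ_1(\CX)$ would force the positive fourth root of $\dim(\FZ_1(\CX)) = \dim(\CX)^2$, namely $\sqrt{\dim(\CX)}$, to be totally positive, contradicting the hypothesis. For (4), each generator has order exactly $2$ by (3), because $\rep(\Zb/p\Zb)$ is symmetric (hence monoidally equivalent to its tensor opposite) and $\sqrt{p}$ is not totally positive. For linear independence, using $\FZ_1(\CX) \bt \FZ_1(\CY) \breq \FZ_1(\CX \bt \CY)$ and $\rep(G) \bt \rep(H) \breq \rep(G \times H)$, any nontrivial product of generators over a finite nonempty set $T$ of distinct primes equals $[\FZ_1(\rep(\prod_{p \in T} \Zb/p\Zb))]_{S_0}$; were it trivial, (1) would force $\sqrt{\prod_{p \in T} p}$ to be totally positive, impossible because $\prod_{p \in T} p > 1$ is squarefree and thus $\sqrt{\prod_{p \in T} p}$ admits $-\sqrt{\prod_{p \in T} p}$ as a Galois conjugate. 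The only non-routine step is spotting the clean chaining in (2) via $\CX \bt \overline{\CX} \mrt \FZ_1(\CX)$; beyond that, everything reduces to dimension arithmetic and the standard fact that square roots of non-square squarefree integers fail to be totally positive.
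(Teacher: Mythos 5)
Your proof is correct and follows essentially the same route as the paper's: the dimension computation $\dim(\CA)=(\dim\CY/\dim\CX)^4$ for (1), the identity $\FZ_1(\CX)^{\boxtimes 2}\breq\FZ_1(\FZ_1(\CX))$ under the hypothesis $\CX\mrt\overline{\CX}$ for (2) (you reach it via $\CX\boxtimes\overline{\CX}\mrt\FZ_1(\CX)$ before taking centers, the paper works directly with $\FZ_1(\CX)\boxtimes\FZ_1(\CX)^{\rev}$, which is the same fact), and the non-total-positivity of $\sqrt{\prod_{p\in T}p}$ for (3) and (4). No gaps.
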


\begin{proof}
(1) Suppose $\CA\weq_{S_{0}}\vect$, then there exist $\CX, \CY \in \fus$ such that 
$$\CA \bt \FZ_1(\FZ_1(\CX)) \cong \FZ_1(\FZ_1(\CY))\,,$$
which implies $\dim(\CA) = \left(\frac{\dim(\CY)}{\dim(\CX)}\right)^4$. By \cite[Rmk.~2.5]{ENO05}, we firstly have $\sqrt{\dim(\CA)}=\left(\frac{\dim(\CY)}{\dim(\CX)}\right)^2$ is totally positive, and then we have the positive square root of $\sqrt{\dim(\CA)}$ is equal to $\frac{\dim(\CY)}{\dim(\CX)}$, which is again totally positive.

(2) The assumption $\CX \mrt \overline{\CX}$ implies $\FZ_{1}(\CX) \breq \FZ_{1}(\overline{\CX}) \breq \FZ_{1}(\CX)^{\rev}$. So
$$
\FZ_{1}(\CX) \bt \FZ_{1}(\CX) \breq \FZ_{1}(\CX) \bt \FZ_{1}(\overline{\CX}) \breq \FZ_{1}(\CX)\bt\FZ_{1}(\CX)^\rev \breq \FZ_{1}(\FZ_{1}(\CX)) \weq_{S_{0}} \vect\,,
$$    
where the $S$-equivalence on the right hand side follows from Lemma \ref{lem:trival-center}. 

(3) The statement follows from (1), (2) and $\dim(\FZ_1(\CX)) = \dim(\CX)^2$.

(4) As is discussed above, for any prime number $p$, $\rep(\Zb/p\Zb)$ satisfies the conditions of (3), so $\left\langle[\FZ_{1}(\rep(\Zb/p\Zb))]_{S_{0}}\right\rangle \simeq \Zb/2\Zb \simeq \mathbb{F}_{2}$, the finite field of 2 elements. Since for any collection of distinct primes $p_{1}, ..., p_{n}$, $\sqrt{\prod_{j=1}^{n} p_{j}}$ is not totally positive, so by (3) and the fact that for any distinct primes $p$ and $q$, $\rep(\Zb/p\Zb) \boxtimes \rep(\Zb/q\Zb) \breq \rep(\Zb/pq\Zb)$, we can conclude that $\left\{[\FZ_{1}(\rep(\Zb/p\Zb))]_{S_{0}}\mid p \text{ is prime}\right\}$ is a linearly independent set over $\mathbb{F}_{2}$. Now we are done by the infinitude of prime numbers.
\end{proof}

We given examples of fusion categories which are not Morita equivalent to their opposites.

\begin{expl}\label{ex:TY}
Let $A$ be a finite abelian group (written multiplicatively). A Tambara-Yamagami category \cite{Tambara1998} is a fusion category whose group of invertible objects is $A$, and whose set of non-invertible simple object is a singleton $\{m\}$, that satisfies the following fusion rules: $a \otimes b \cong ab$, $a \otimes m \cong m \otimes a \cong m$, $m \otimes m \cong \bigoplus_{a \in A} a$ for all $a, b \in A$. It is shown in op.\ cit.\  that such a category is determined by a nondegenerate symmetric bicharacter $\chi$ on $A$ and a choice of square root $\tau$ of $1/|A|$, and is denoted by $\EuScript{TY}(A, \chi, \tau)$. For example, an Ising fusion category is a Tambara-Yamagami category with $A = \Zb/2\Zb$. 

In \cite{GNN09}, the modular data of $\FZ_1(\EuScript{TY}(A, \chi, \tau))$ is explicitly given. By the Eilenberg-MacLane Theorem \cite{EM471}, the pointed subcategory of $\FZ_1(\EuScript{TY}(A, \chi, \tau))$ is completely determined by the quadratic form $Q_{\chi}: A \times \Zb/2\Zb \to \Cb, (a, x) \mapsto \chi(a,a)^{-1}$.

It is easy to see that for any Tambara-Yamagami category $\EuScript{TY}(A, \chi, \tau)$, the pointed subcategory of $\FZ(\EuScript{TY}(A, \chi, \tau))^{\rev}$ is determined by $Q_{\chi}^{-1}$. Therefore, as long as $Q_{\chi}$ is not equivalent to $Q_{\chi^{-1}}$ as quadratic forms,  $\FZ(\EuScript{TY}(A, \chi, \tau))$ cannot be equivalent to $\FZ(\EuScript{TY}(A, \chi, \tau))^{\rev}$, and consequently $\EuScript{TY}(A, \chi, \tau)$ cannot be Morita equivalent to $\overline{\EuScript{TY}(A, \chi, \tau)}$. For example, for any prime $p \equiv 3\mod 4$, the Tambara-Yamagami category $\EuScript{TY}(\Zb/p\Zb, \chi, \tau)$ with $\chi(a,b) = \zeta_p^{ab}$ and $\tau = \sqrt{p}$ is not Morita equivalent to $\overline{\EuScript{TY}(\Zb/p\Zb, \chi, \tau)}$ as -1 is not a quadratic residue mod $p$.
\end{expl}


When $S = \bfcfus$, by definition, $\FZ_{1}(\CX)\weq_S \vect$ means that there exists $\CA$, $\CB \in \bfcfus$ such that 
\begin{equation*}
\FZ_{1}(\CX) \boxtimes \FZ_{1}(\CA) \breq \FZ_{1}(\CB)\,, \quad\quad 
\mbox{or equivalently}  \quad\quad \CX \boxtimes \CA \mrt \CB. 
\end{equation*}
In particular, if $\CX \in \fus$ is Morita equivalent to some braided fusion category, then $\FZ_{1}(\CX)\weq_S \vect$.

\begin{rem}
Let $\CA$ be a braided fusion category. Recall that the braiding yields a tensor equivalence $\CA \oeq \overline{\CA}$, so if a fusion category $\CX$ is Morita equivalent to $\CA$, then $\CX$ must be Morita equivalent to $\overline{\CX}$ via $\CX \mrt \CA \oeq \overline{\CA} \mrt \overline{\CX}$. In particular, the Tambara-Yamagami category $\EuScript{TY}(\Zb/p\Zb, \chi, \tau)$ in Example \ref{ex:TY} cannot be Morita equivalent to any braided fusion category.
\end{rem}

\begin{prop}
When $S=\nbfcfus$, the group $\ker(\omega_S)$ is an infinite abelian group. 
\end{prop}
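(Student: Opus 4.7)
The plan is to produce infinitely many distinct elements in $\ker(\omega_{\nbfcfus})$ from Drinfeld centers of Tambara-Yamagami categories. Since $S_0\subset\nbfcfus$, Corollary \ref{cor:grp} already gives that $\ker(\omega_{\nbfcfus})$ is a group, so only the infiniteness remains to be established.

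The key device will be a monoid homomorphism $\Phi:\nbfc\to W$ into the classical Witt group $W=\bigoplus_{\ell\text{ prime}}W_\ell$ of nondegenerate quadratic forms on finite abelian groups. For $\CA\in\nbfc$, the pointed subcategory $\CA_{pt}$ corresponds by Eilenberg-MacLane to a finite abelian group $G_\CA$ equipped with a (possibly degenerate) quadratic form $q_\CA:G_\CA\to\Cb^\times$; define $\Phi(\CA)$ to be the Witt class of the nondegenerate form obtained from $q_\CA$ by quotienting out its radical. Because $(\CA\boxtimes\CB)_{pt}=\CA_{pt}\boxtimes\CB_{pt}$ with associated quadratic form $q_\CA\oplus q_\CB$, $\Phi$ is a monoid homomorphism. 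The essential check that $\Phi$ descends to $\bar\Phi:\Witt_{\nbfcfus}\to W$ is that $\Phi(\FZ_1(\CP))=0$ for every $\CP\in\nbfcfus$: writing $\CP\oeq\widetilde\CP$ with $\widetilde\CP$ nondegenerate braided gives $\FZ_1(\CP)\breq\widetilde\CP\boxtimes\overline{\widetilde\CP}$, whose pointed part carries $q_{\widetilde\CP}\oplus q_{\widetilde\CP}^{-1}$ on $G_{\widetilde\CP}\oplus G_{\widetilde\CP}$; after quotienting by the radical $R\oplus R$, the diagonal of $(G_{\widetilde\CP}/R)\oplus(G_{\widetilde\CP}/R)$ is a Lagrangian of the resulting nondegenerate form $\bar q\oplus\bar q^{-1}$, forcing the Witt class to vanish.

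For the concluding step, for each prime $p\equiv 3\pmod{4}$ take $\CT_p=\EuScript{TY}(\Zb/p\Zb,\chi_p,\tau_p)$ with $\chi_p(a,b)=\zeta_p^{ab}$. By Example \ref{ex:TY}, $\FZ_1(\CT_p)_{pt}$ has quadratic form $Q_{\chi_p}(a,x)=\zeta_p^{-a^2}$ on $\Zb/p\Zb\times\Zb/2\Zb$ with radical $0\oplus\Zb/2\Zb$, so the reduced nondegenerate form is $a\mapsto\zeta_p^{-a^2}$ on $\Zb/p\Zb$. Since $-1$ is not a square modulo such $p$, this form is nontrivial in the $p$-primary component $W_p\subset W$. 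Distinct primes $p\equiv 3\pmod{4}$ therefore yield values $\bar\Phi([\FZ_1(\CT_p)]_{\nbfcfus})$ in different primary components, hence pairwise distinct; as each $[\FZ_1(\CT_p)]_{\nbfcfus}$ sits in $\ker(\omega_{\nbfcfus})$ by construction, Dirichlet's theorem on primes in arithmetic progressions then supplies infinitely many distinct classes there.

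The main obstacle I expect is the well-definedness of $\Phi$ on $\Witt_{\nbfcfus}$---one must carefully handle possibly degenerate quadratic forms, Deligne tensor products of pointed subcategories, and Drinfeld centers of objects in $\nbfcfus$---which contrasts with the situation for arbitrary fusion categories, where $\FZ_1(\CX)_{pt}$ can realize arbitrary Witt classes in $W$ (as the Tambara-Yamagami examples themselves witness), and it is precisely this distinction that makes the argument work.
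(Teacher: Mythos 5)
Your argument is a genuinely different route from the paper's. The paper takes $\rep(G)$ for non-abelian finite simple groups $G$, shows $[\FZ_1(\rep(G))]_S$ is a nontrivial $2$-torsion element of $\ker(\omega_S)$ by a subcategory-containment and condensation argument (using the triviality of the universal grading group of $\rep(G)$, Nikshych's theorem on subcategories of $\FZ_1(\CB)\breq\CB\boxtimes\CB^{\rev}$, and a dimension-reduction step), and separates distinct $G$ via the classification of fusion subcategories of $\FZ_1(\rep(G))$. You instead construct an explicit invariant $\bar\Phi:\Witt_{\nbfcfus}\to W$ valued in the classical Witt group of quadratic forms, read off from the pointed subcategory, and evaluate it on Drinfeld centers of Tambara--Yamagami categories. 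Your approach, once repaired as below, buys more: it shows the classes $[\FZ_1(\EuScript{TY}(\Zb/p\Zb,\chi,\tau))]_S$ have order divisible by $4$ (consistent with Example \ref{ex:TY}, since these categories are not Morita equivalent to their opposites), whereas the paper's examples are all $2$-torsion; and the invariant is computable on any class. (Two small points: the nontriviality of the reduced form on $\Zb/p\Zb$ does not need $-1$ to be a nonsquare --- a rank-one form on a cyclic group of prime order admits no Lagrangian, so every odd prime works; and you correctly reduce to infiniteness via Corollary \ref{cor:grp}, which applies since $S_0\subset\nbfcfus$.)

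There is, however, one concrete gap, exactly at the point you flagged: $\Phi$ is not well defined on all of $\nbfc$ as stated, and the descent argument forces you to define it everywhere, since the categories $\CA,\CB$ appearing in $\CA\boxtimes\FZ_1(\CP)\breq\CB\boxtimes\FZ_1(\CQ)$ are arbitrary. The radical $R$ of the bicharacter associated to $q_\CA$ need not be isotropic for $q_\CA$: a transparent invertible object $\psi$ with $q_\CA(\psi)=-1$ lies in $R$ but prevents $q_\CA$ from descending to $G_\CA/R$. Such $\CA\in\nbfc$ exist --- any Ising category has pointed part $\Zb/2\Zb$ with trivial bicharacter and $q(\psi)=-1$ --- so ``the nondegenerate form obtained by quotienting out the radical'' does not exist there. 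The repair is to define $\Phi$ using only the odd-primary component of the pointed subcategory, valued in $\bigoplus_{\ell\ \mathrm{odd}}W_\ell$: for $x$ of odd order $n$ one has $q(x)^{n^2}=q(x^n)=1$, so $q(x)$ has odd order, and $x\in R$ gives $q(x)^2=b(x,x)=1$, hence $q(x)=1$; thus the reduction is always defined on the odd part, additivity and the Lagrangian argument for $\Phi(\FZ_1(\CP))=0$ go through verbatim, and your Tambara--Yamagami computation lands entirely in $W_p$ for odd $p$ and is unaffected. With that modification the proof is complete.
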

\begin{proof}
  Let $G$ be a non-abelian finite simple group. By Proposition \ref{prop:S0}, $[\FZ_1(\rep(G))]^2_{S_0} = [\vect]_{S_0}$, which, by Lemma \ref{lem:contain}, implies that $[\FZ_1(\rep(G))]^2_{S} = [\vect]_{S} \in \ker(\omega_S)$. We argue that in this case, $[\FZ_1(\rep(G))]_S  \ne [\vect]_S$. If not, then there exist $\CA$, $\CB \in S$ such that 
\begin{equation}\label{eq:simple}
\FZ_1(\rep(G)) \boxtimes \CA \boxtimes \CA^{\rev} \breq \FZ_1(\rep(G)) \boxtimes \FZ_1(\CA) \breq \FZ_1(\CB) \breq \CB \boxtimes \CB^{\rev}\,.
\end{equation}
According to \cite{GN08}, if $G$ is simple, then the universal grading group of $\rep(G)$ is trivial. Then by \cite[Thm.~2.2]{Nik19}, the fact that $\FZ_1(\CB) \breq \CB \boxtimes \CB^{\rev}$ contains $\rep(G)$ as a subcategory implies that $\CB$ contains $\rep(G)$ as a subcategory. This in turn implies that $\CB^{\rev}$ contains also $\rep(G)$ as $\rep(G)$ is symmetric. So the right hand side of \eqref{eq:simple} contains $\rep(G)\bt\rep(G)$. By dimension counting, $\FZ_1(\rep(G))$ does not contain $\rep(G) \bt \rep(G)$, so we must have $\CA\bt\CA^\rev$ contains $\rep(G)$. By the same argument as above, $\FZ_1(\CA)\breq\CA\bt\CA^\rev$ also contains $\rep(G) \boxtimes \rep(G) = \rep(G\times G)$. Therefore, for any $\CA$, $\CB \in \nbfc$ satisfying Eq.\ \eqref{eq:simple}, both sides of the equation contain $\rep(G)^{\bt 3}$, and in particular, the global dimensions of the categories on both sides the equation are at least $|G|^3$. 

Now if we condense $\1 \bt \fun(G\times G)$ on the left hand side of Eq.\ \eqref{eq:simple} and condense $\fun(G\times G)$ on the right hand side, we will obtain another pair of nondegenerate braided fusion categories of smaller global dimensions satisfying Eq.\ \eqref{eq:simple}. By the above discussions, we can keep condensing until we end up with two nondegenerate braided fusion categories satisfying \eqref{eq:simple} but the global dimensions of the categories on both sides of the equation are smaller than $|G|^3$, and we have reached a contradiction.

Finally, since $\FZ_1(\rep(G))$ has no proper nontrivial fusion subcategories other than $\rep(G)$ \cite[Theorem~1.2,1.3]{NNW09}, so using similar argument as above, we can conclude that if $G$ and $G'$ are distinct finite simple non-abelian groups, then $[\FZ_1(\rep(G))]_S \ne [\FZ_1(\rep(G'))]_S$. Now we are done by the fact that there are infinitely many non-abelian finite simple groups.
\end{proof}

\section{Summary and outlook}
In this paper, aiming at a more controlled way to approach the classification problem of (braided) fusion categories and potential applications in the study of topological orders and topological phase transitions \cite{KZ18,KZ20,KZ21-gapless-2, CJKYZ20,CW23,LY23,CJW22}, we provide an idea of generalizing the Morita equivalence and the Witt equivalence. The equivalence relations that are less refined than the usual Morita equivalence among fusion categories produce abelian groups if we quotient these equivalence relations from the commutative monoid of the equivalence classes of fusion categories. They also lead to equivalence relations among nondegenerate braided fusion categories that are more refined than the usual Witt equivalence, and reveal some hidden structures within each Witt class. 

The idea of generalizing the Morita equivalence and the Witt equivalence has some natural physical motivations and interpretations \cite[Section\ VIII.D \& VIII.E]{KW14}, which work in more general contexts. In this section, we briefly outline some possible generalizations in a couple of directions, to which this idea naturally applies. 

\subsection{Generalizations to (braided) fusion categories over \texorpdfstring{$\CE$}{}}

Let $\CE$ be a symmetric fusion category. A fusion category over $\CE$ is a pair $(\CX, T)$ where $\CX$ is a fusion category and $T: \CE \to \FZ_1(\CX)$ is a braided monoidal functor such that the composition of $T$ and the forgetful functor $F: \FZ_1(\CX) \to \CX$ is fully faithful \cite{DNO}. Let $\fus_\CE$ be the underlying 
set of equivalence classes of fusion category over $\CE$. It can be endowed with the structure of a commutative monoid via $\boxtimes_\CE$, the tensor product over $\CE$. A braided fusion category over $\CE$ is a braided fusion category $\CA$,  together with a braided tensor embedding $T: \CE \to \FZ_2(\CA)$, and $\CA$ is called nondegenerate over $\CE$ if $T$ is an equivalence. We denote by $\NBFC_{/\CE}$ the 2-category of nondegenerate braided fusion categories over $\CE$. It is symmetric monoidal with the tensor unit defined by $\CE$ and the tensor product defined by $\boxtimes_\CE$. We denote its underlying commutative monoid by $\nbfc_{/\CE}$.

If $\CX$ is a fusion category over $\CE$, then $\FZ_1(\CX)$ contains $\CE$ as a fusion subcategory. The M\"uger centralizer of $\CE$, denoted by $\FZ_1(\CX, \CE)$, is called the relative center of $\CX$. It is easy to see that $\FZ_1(\CX, \CE)$ is nondegenerate over $\CE$. 

A notion of Witt${}_{/\CE}$ equivalence relation between two nondegenerate braided fusion category over $\CE$ was introduced in \cite{DNO}. 
We give a generalization of this notion below.
\begin{defn}
Let $S \subset \fus_\CE$ be a nonempty subset closed under $\boxtimes_\CE$.  Two nondegenerate braided fusion categories $\CA$, $\CB$ over $\CE$ are \emph{$S$-Witt${}_{/\CE}$ equivalent} if there exist fusion categories $\CX$, $\CY$ over $\CE$ such that 
\begin{equation}
  \label{eq:2}
  \CA \boxtimes_\CE \FZ_1(\CX, \CE) \breq \CB \boxtimes_\CE \FZ_1(\CY, \CE)\,.
\end{equation}
In this case, we write $\CA\weq^{\CE}_{S} \CB$. Two fusion categories $\CL$, $\CM$ over $\CE$ are called \emph{S-Morita${}_{/\CE}$ equivalent} if $\FZ_1(\CL, \CE) \weq^\CE_S \FZ_1(\CM, \CE)$. The $S$-Witt${}_{/\CE}$ equivalence class of $\CA$ over $\CE$ is denoted by $[\CA]_{S}^{\CE}$.
\end{defn}

Clearly, $\weq^\CE_S$ is an equivalence relation on $\nbfc_{/\CE}$, and when $S = \fus_\CE$, we recover the Witt equivalence over $\CE$ defined in \cite{DNO}. Similar as before, the set of $S$-Witt${}_{/\CE}$ equivalence classes over $\CE$, denoted by $\Witt_S(\CE)$, has the structure of a commutative monoid. It was shown in \cite{DNO} that $\Witt(\CE) := \Witt_{\fus_{\CE}}(\CE)$ is an abelian group, which is called the Witt group over $\CE$. We have a surjective monoid homomorphism $\Witt_{S}(\CE) \to \Witt(\CE)$ by sending $[\CA]_S^{\CE}$ to $[\CA]^{\CE} := [\CA]_{\fus_{\CE}}^{\CE}$.

\begin{prob}
  Find interesting submonoids of $\fus_{\CE}$ that makes $\Witt_{S}(\CE)$ a group, and determine its group structure.
\end{prob}

\subsection{Generalizations to (braided) fusion higher categories}
According to the mathematical theory of topological orders in higher dimensions \cite{KW14,KWZ15,JF22,KZ22}, a nondegenerate braided fusion $n$-category describes an anomaly-free $n+$2D topological order; and a fusion $n$-category describes a potentially anomalous $n+$1D topological order. The idea of generalizing the Morita equivalence and the Witt equivalence naturally applies to these cases. We briefly outline the idea and point out some new phenomena in higher dimensional cases. 

\begin{rem}
The classification of nondegenerate braided fusion $n$-categories is equivalent to that of invertible objects in $(n+3)\vect$ \cite{JF22,KZ21-QL2}. 
\end{rem}

Let $\fus_n$ be the commutative monoid of the equivalence classes of fusion $n$-categories with the identity given by $n\vect:=\Sigma^n\Cb$ \cite{GJF19} and the multiplication given by a generalization of Deligne's tensor product that is well defined for $\Cb$-linear Karoubi complete $n$-categories \cite{GJF19,JF22}, still denoted by $\boxtimes$. The notion of Morita equivalence `$\mrt$' between two (multi-)fusion $n$-categories is well defined \cite{JF22,KZ22}. 

Let $S$ be subset of $\fus_n$ that is closed under the multiplication $\boxtimes$. For $n>1$, similar to the examples of submonoids in $\fus$ discussed in Section\,\ref{sec:sub-monoids}, there are more natural choices of submonoids of $\fus_n$ consisting of those monoidally equivalent to $E_m$-fusion $n$-categories for $m\geq 1$ \cite{JF22,KZ22}. We denote these submonoids of $\fus_n$ by ${}^{E_m}\fus_n$. 
\begin{defn}
Two nondegenerate braided fusion $n$-categories $\CA$ and $\CB$ are called \emph{$S$-Witt equivalent}, denoted by $\CA \weq_S \CB$, if there exist two fusion $n$-categories $\CP, \CQ\in S$ such that
\be 
\CA \boxtimes \FZ_1(\CP) \breq \CB \boxtimes \FZ_1(\CQ)\,.
\ee
\end{defn}

\begin{rem}
When $S={}^{E_m}\fus_n$, the above definition of $S$-Witt equivalence relation is the mathematical reformulation of \cite[Def.\ 24]{KW14}. 
\end{rem}

\begin{defn}
Two fusion $n$-categories $\CL$ and $\CM$ are called \emph{$S$-Morita equivalent}, denoted by $\CL \mrt_S \CM$ if there exists $\CP,\CQ \in S \subset \fus_n$ such that 
$$
\CL \boxtimes \CP  \mrt  \CM \boxtimes \CQ\,.
$$ 
\end{defn}
When $S$ consists of only the identity of $\fus_n$, $\mrt_S$ is precisely the usual Morita equivalence $\mrt$. 

\begin{rem}
When discussing the relation between the $S$-Morita equivalence and the $S$-Witt equivalence, one should be more careful when $n>1$. Although it is still true that two Morita equivalent fusion $n$-categories share the same $E_1$-center, the converse statement is not true \cite{KZ21-QL2} unless we generalize the notion of Morita equivalence as in \cite{KZ21-gapless-2}. This generalization of Morita equivalence amounts to define the Morita equivalence for the higher categories of enriched fusion higher categories \cite{KZ21-gapless-2,KZ22}. The physical reason behind it is the fact that two gapped boundaries of the same bulk topological order cannot be connected by a gapped domain wall in general. In other words, a domain wall connecting such two gapped boundaries is necessarily gapless. 
\end{rem}

\bibliographystyle{alpha}
\bibliography{MyRef}

\end{document}